\def\BState{\State\hskip-\ALG@thistlm}
\DeclareMathOperator*{\esssup}{ess\,sup}
\DeclareMathOperator*{\argmin}{arg\,min}
\newcommand{\dd}{\mathrm{d}}
\newcommand{\DD}{\mathrm{D}}
\newcommand{\1}{\mathbf{1}}
\newcommand{\ie}{\emph{i.e.}}
\begin{document}

\title{Stackelberg Risk Preference Design*
%\thanks{Grants or other notes
%about the article that should go on the front page should be
%placed here. General acknowledgments should be placed at the end of the article.}
}
%\subtitle{quantification of randomness as a decision}

%\titlerunning{Short form of title}        % if too long for running head

\author{Shutian Liu         \and
        Quanyan Zhu %etc.
}

%\authorrunning{Short form of author list} % if too long for running head

\institute{
*This version: December, 2022.
\\
Corresponding Author: S. Liu \at
Affiliation: Department of Electrical and Computer Engineering,               Tandon School of Engineering, New York University \\
              \email{sl6803@nyu.edu}           %  \\
%             \emph{Present address:} of F. Author  %  if needed
           \and
           Q. Zhu \at
Affiliation: Department of Electrical and Computer Engineering,               Tandon School of Engineering, New York University \\
              \email{qz494@nyu.edu} 
}

\date{Received: date / Accepted: date}
% The correct dates will be entered by the editor

\maketitle

\begin{abstract}
Risk measures are commonly used to capture the risk preferences of decision-makers (DMs). The decisions of DMs can be nudged or manipulated when their risk preferences are influenced by factors such as the availability of information about the uncertainties. This work proposes a Stackelberg risk preference design (STRIPE) problem to capture a designer's incentive to influence DMs' risk preferences. STRIPE consists of two levels. In the lower level, individual DMs in a population, known as the followers, respond to uncertainties according to their risk preference types. In the upper level, the leader influences the distribution of the types to induce targeted decisions and steers the follower's preferences to it. Our analysis centers around the solution concept of approximate Stackelberg equilibrium that yields suboptimal behaviors of the players. We show the existence of the approximate Stackelberg equilibrium. The primitive risk perception gap, defined as the Wasserstein distance between the original and the target type distributions, is important in estimating the optimal design cost. We connect the leader's optimality compromise on the cost with her ambiguity tolerance on the follower's approximate solutions leveraging Lipschitzian properties of the lower level solution mapping. To obtain the Stackelberg equilibrium, we reformulate STRIPE into a single-level optimization problem using the spectral representations of law-invariant coherent risk measures. We create a data-driven approach for computation and study its performance guarantees. We apply STRIPE to contract design problems under approximate incentive compatibility. Moreover, we connect STRIPE with meta-learning problems and derive adaptation performance estimates of the meta-parameters.

%In the past few years, such influences have been witnessed in misinformation campaigns in elections, propaganda over online social networks, and fake news in the media. 

%Each individual DM in a population, known as a follower, is endowed with a risk preference type, which leads to a risk measure capturing her quantification of the uncertainties. The designer or the leader chooses to influence the risk measure of a population of indistinguishable DMs by finding a risk measure that will induce targeted decisions and steering their preferences to it. 
%The STRIPE problem consists of two subproblems. The lower level follower’s problem studies the response of a population of DMs to uncertainties under the influence of the leader . The upper level leader’s problem is to design the target distribution of the risk types by balancing the outcome of the follower’s response and the cost of design.

%To fit stochastic decision problems, we consider an extended approximate Stackelberg solution, where both of leader's and the follower's actions can be suboptimal.
%We introduce the primitive perception gap measuring the discrepancy between the original and the anticipated distributions of types to derive bounds on the approximate solution.
%Leveraging the coherency and law-invariance property of risk measures, we derive a single-level reformulation to solve our problem.

\keywords{Risk Design \and Risk Measures \and Stackelberg Game \and Principal-Agent Problem \and Meta-Learning}
% \PACS{PACS code1 \and PACS code2 \and more}
\subclass{MSC code1 \and MSC code2 \and more}
\end{abstract}

\section{Introduction}
\label{sec:intro}
Risk preference describes the perception of losses or gains when DMs face random outcomes.
Modeling of risk dates back to the utility theory of von Neumann and Morgenstern \cite{von2007theory}, where decisions of rational players seek to maximize their expected utility or satisfaction. The expected utility leads to decisions that are often referred to as risk-neutral.
However, human risk perception often exhibits nonlinearity \cite{kahneman2013prospect}.  
Risk-aversion is a phenomenon where DMs tend to prefer outcomes with low uncertainties to high ones, even when the expected utility of the former outcome is lower. 
The celebrated Arrow-Pratt measure of relative risk aversion \cite{meyer2005relative} utilizes the nonlinearity of the utility functions to capture the risk preferences of DMs.
The cumulative prospect theory of Tversky and Kahneman \cite{tversky1992advances} enriches the description of risk attitudes by distinguishing losses from gains and by incorporating probability weightings on the cumulative distribution functions, as in the proposed rank-dependent utility model.
Other related approaches include the dual utility theory, stochastic orders and etc., we refer the readers to \cite{yaari1987dual,wang1998ordering,levy1992stochastic,dentcheva2003optimization}
and the references therein.

The seminal paper by Aztner et al. \cite{artzner1999coherent} has introduced coherent risk measures (CRMs) to quantify risk. The coherency holds if a risk measure satisfies four axioms, namely, monotonicity, convexity, translation equivariance, and positive homogeneity. There is a vast literature on risk quantification using the axiomatic approach, see, for example, the monographs \cite{follmer2016stochastic,pflug2014multistage} and the references therein.
One of the most essential properties of a CRM is its dual representation \cite{artzner1999coherent,follmer2016stochastic,ruszczynski2006optimization}. 
The representation conveniently transforms the computation of risk into a problem of finding the worse-case probability density function associated with the random cost function. Another line of research called distributionally robust optimization \cite{wiesemann2014distributionally} is closely related to CRMs due to this dual representation \cite{shapiro2017distributionally}.
A quintessential example of CRMs is the average value-at-risk (AV@R), which is a cumulative version of the value-at-risk (V@R), or the left side quantile. V@R has been used as the standard measure of risk for various applications in finance until the introduction of the AV@R due to the non-coherency of V@R.
The significance of AV@R in CRMs has been made more pronounced by their Kusuoka representations \cite{kusuoka2001law}.

The literature on risk measures focuses on the analysis or applications of given risk measures. 
However, individual risk preferences can be affected by exogenous manipulations or influences. 
Empirical evidence of the instability of human risk preferences has been observed in various scenarios.
In \cite{hanaoka2018risk}, the authors have discovered that there is a positive correlation between the frequency of gambling and an earthquake. This phenomenon has shown that the experience of natural disasters increases the level of risk tolerance of individuals.
In \cite{schildberg2018risk}, the author has leveraged the notions of preference stability from microeconomics and personality traits from personality psychology and has shown that, while the mean-level of individual risk preferences changes abruptly according to exogenous shocks, such as economic crises and violent conflicts, there is an overall tendency of becoming more risk-averse as one grows older.
The works \cite{levin2007stability,dohmen2016time,barseghyan2011risk} have also recorded empirical evidence of the instability of risk preferences.
The psychological evidence of the instability of risk perceptions has also been documented.
In \cite{slovic2006risk,slovic1995construction}, a psychological theory has been adopted to study the change or evolution of preference when DMs are subject to exogenous influence. The findings indicate that individual preferences can be manipulated through nudging, marketing, or propaganda.
More recently, in \cite{yuen2020psychological}, the authors have found that 
risk preferences exhibit population-level patterns due to social influences.
%According to the above evidences, it is necessary to investigate how to choose or design risk preferences for precise modeling of risk attitudes in decision-making scenarios.

In this paper, we formally model the risk influence problem by proposing a Stackelberg risk preference design (STRIPE) framework to enable the design of population-level risk preferences. In particular, we introduce risk preference types (RPTs) to a population of agents making decisions under uncertainties. The risk attitude of individuals with an RPT is represented by a risk measure. The follower in the Stackelberg game is a population of indistinguishable agents, which can be equivalently represented by an idiosyncratic individual from the population whose average utility captures the one for the population. The leader in the game is a designer who determines the target distribution of RPTs and steers the population to it to achieve the leader’s design objective. Influencing the risk attitude of the population is nontrivial and costly. The effort to maneuver the RPT is quantified by the deviation of the target distribution from the uncontrolled RPT distribution.

One feature of our STRIPE framework is to consider the behavior of the population. Individuals of the same RPT in the population are viewed statistically indistinguishable.  Hence, the RPT of the population is represented by the RPT distribution. The distributional perspective can empower the interpretation of a mixed strategy of the leader where the leader and the follower interact repeatedly for a sufficiently long period of time. The STRIPE problem has a bilevel structure, allowing a customizable design for a population with specific characteristics and application-driven constraints.
At the lower level, an idiosyncratic agent who bears the averaged RPT is used to represent the population. This agent's decision under the uncertainty faced by the population is considered as the follower's action in the Stackelberg game.
At the upper level, the designer chooses an RPT distribution that jointly optimizes the design target as a function of the follower's action and the cost of RPT distribution manipulation.

The analysis of the STRIPE framework centers around the solution concept of approximate Stackelberg equilibrium that yields an outcome that corresponds to the suboptimal behaviors of both the leader and the follower.
The stochasticity of the follower's problem induces the suboptimality of the follower's action.
This suboptimality allows a set of $\epsilon$-solutions that are anticipated by the leader, naturally leading to approximate Stackelberg equilibrium solutions.

We define the primitive risk perception gap, which measures the distance between the uncontrolled RPT distribution and the target distribution that solicits the desired follower’s behavior. This gap is then used to characterize approximate solutions to the leader’s problem given candidate RPT distributions.
In particular, we derive an upper bound on the leader's cost represented by the deviation of the $\epsilon$-optimal sets given the uncontrolled RPT distribution and the target distribution using the primitive risk perception gap. 
The results build on the growth conditions for the stability analysis of optimization problems.

The suboptimal behaviors of the leader and the follower are interdependent.
Leveraging mathematical tools from set-valued analysis, we show that, under continuity and differentiability assumptions on the cost functions,
the leader's design of RPT distribution can be calibrated by the difference between a follower's response and his anticipated follower solution.
Furthermore, we connect the leader's optimality compromise of her objective with her ambiguity tolerance of the follower's approximate solutions. 
In particular, the leader's optimality compromise induced by her ambiguity tolerance is upper bounded logarithmically.

To develop analytical and computational solutions to the STRIPE framework, we transform the follower’s risk-sensitive stochastic optimization problems in terms of AV@Rs with the aid of the Kusuoka representation theorem. Then, by approximating the risk spectrum using step functions, we reformulate the follower’s problem into a convex optimization problem when the cost functions are convex. Finally, the STRIPE problem is cast into a tractable single-level optimization problem using the optimal value reformulation. 
A data-driven approach to dealing with stochasticity based on the sample average approximation (SAA) technique for stochastic programs is introduced.
We extend the classic finite sample performance guarantee of SAA and show that $\epsilon$-optimal follower's solution can be obtained with high probability given sufficiently many samples.

Contract design is a class of Principal-Agent (P-A) problems that boast the feature of bi-level structures. The leader is the contract designer or the principal who optimizes his own utility by offering to the agent or the follower a contractual agreement on the resource flow between them.
The follower observes the contract and decides whether to participate. 
An incentive-compatible contract encourages the agent to take action on behalf of the principal and guarantees beneficial participation.
The STRIPE framework enables the joint design of risk preference and the contract. 
The participant is an idiosyncratic individual of a population of agents with various RPTs.
The contract designer is provided with an additional degree of freedom, the RPT distribution of the population, when she determines the contract. 
We use a contract design problem under approximate incentive compatibility as a case study.
We show that, when the principal holds a pessimistic perspective, the consideration of $\epsilon$-approximate incentive compatibility induces an optimality gap and decreases the principal's revenue from the one obtained using exact incentive compatibility at the worst possible rate of $O(\epsilon^{1/2})$.
The contract problem can be solved using the sampled reformulation introduced in Section \ref{sec:reformulation}.

Another key application of STRIPE is in machine learning and data science. We connect our framework with the adversarial meta-learning problem, where the leader chooses the learning task distribution, and the follower performs meta-learning. The RPT represents a learning task, and the follower determines a meta-parameter that optimizes the average performance of a set of learning tasks. The risk-sensitivity of the follower’s problem in our framework leads to a risk-sensitive adversarial learning task, which is closely related to a distributionally robust machine learning problem.
The leader’s objective represents exogenous guidance on the meta-parameter. This guidance can incorporate past experiences, expert advice, and mean-level standards on the meta-parameter, enriching the calibration of the meta-parameter from only using the test data.
Leveraging the sensitivity results of the optimal value of the follower's problem, we show that the complete performance estimate of the adaptations of a meta-parameter to the learning tasks of interest can be obtained.
The performance estimates can be utilized to either improve the training of the meta-parameter or reduce the computation requirement in the lower level problem.

We briefly review the related literature in the next section. Section \ref{sec:problem} presents the formulation of the STRIPE framework, the solution concept, and the equilibrium existence results.
In Section \ref{sec:analysis}, we first provide the estimation errors of candidate solutions; then, we elaborate on parameter selection in the approximate Stackelberg equilibrium.
We discuss analytical and computational solutions to the STRIPE problem in Section \ref{sec:reformulation}.
Selected applications of STRIPE are presented in Section \ref{sec:case}.
Section \ref{sec:conclusion} concludes the paper.

\section{Related Works}
\label{sec:related}
CRMs are useful to capture different subjective risk attitudes \cite{pichler2017quantitative}.
There is a recent growing body of literature that studies the selection or design of risk preferences. 
The spectral risk measures introduced in \cite{acerbi2002spectral} provide a convenient way of representing a CRM using its associated risk spectrum. The risk spectrum weights the V@R of the random loss for different probability parameters. 
The advantage of spectral risk measures in the design of risk preference arises from the monotonicity of the risk spectrum and its convenience for approximation.
In \cite{acerbi2002portfolio}, the authors have proposed a  method of computation by leveraging this property and the equivalence of an AV@R to a convex optimization problem \cite{rockafellar2000optimization}.
This methodology and the corresponding analysis of spectral risk measures are extended in \cite{shapiro2013kusuoka}.
The computational method of the STRIPE problem also builds on spectral representation. 
Furthermore, since we are interested in a single-level reformulation of the Stackelberg game, the tractability and the constraint qualification issues of the single-level reformulation rely heavily on the computation of the risks of the follower's problem. 
In \cite{guo2021robust}, the authors have put forward a robust spectral risk optimization framework where the choice of risk spectrum aims to be robust to an ambiguity set defined by a group of moment-type constraints. These constraints have the interpretation of elicited preference information, making the selected risk spectra capture individual risk preferences. 
A sorting-free computational method based on the approximation of risk spectra has been discussed in \cite{guo2021robust}.
The work has also provided an ample amount of spectrum and probability  approximation errors, extending the quantitative stability results discussed in \cite{romisch2003stability,pichler2018quantitative,shapiro1994quantitative,wang2020robust}. 

The selection of risk measures has also been investigated from an axiomatic perspective. 
In \cite{delage2018minimizing}, the authors have proposed optimizing financial positions subject to subsets of axiomatic properties of risk measures. 
Referred to as preference robust risk minimization, the approach in \cite{delage2018minimizing} constructs risk measures based on the most pessimistic assessment elicited from users' choice preference information.
The attempt to risk preference selection introduced in \cite{liu2020robust} focuses on balancing between the worst-case loss and expected loss.
More recently, in \cite{li2021inverse}, the author has considered an inverse optimization framework for selecting risk measures. 
Through inverse optimization, a risk measure is designed to satisfy predetermined axiomatic properties and optimizes, for example, the deviation from a reference risk measure.
The approach in \cite{li2021inverse} results in tractable convex programs and, unlike many standard inverse optimization problems, is nonparametric.
In the STRIPE framework, the objective function of the leader can be considered constructed from reference risk measures or elicited assessment standards.
As we will discuss in later sections, we assume that an anticipated follower's decision exists when we present the estimation of equilibrium solutions.
We take a game-theoretic approach and focus on the risk preference distribution in populations, which naturally leads to the mixed strategies of the leader.

Another line of research related to us is the literature on ambiguity set construction in distributionally robust optimization problems.
Since the ambiguity set of distributions captures individual risk attitudes towards distributional uncertainties, the construction of ambiguity sets is essentially a design of the risk attitude of a decision-maker. We refer the readers to \cite{bertsimas2009constructing,wiesemann2014distributionally,delage2010distributionally} for the construction techniques.

This work builds on and contributes to the literature on bilevel programming. Classic results on optimality conditions and constraint qualifications can be found in \cite{ye1997exact,dempe2002foundations,dempe2013bilevel}.
In \cite{lin2014solving}, the authors have focused on simple bilevel programming problems (Stackelberg games) where the lower level problem is nonconvex.
They have developed a smoothing projected gradient algorithm for solving the nonsmooth nonconvex single-level reformulation of the bilevel program.
In particular, an approximate bilevel program where an $\epsilon$-optimal lower level solution is considered.
This relaxation makes it easier for the nonsmooth Mangasarian-Fromovitz constraint qualification (MFCQ) to hold in the single-level reformulation.
Our work is related to the recent work \cite{burtscheidt2020risk}, where the authors have considered a risk-averse two-stage bilevel stochastic linear program.  
The stochasticity arises from the fact that the follower's action is taken at the second stage.
The authors have shown the fundamental properties of the leader's choice function and have reformulated the stochastic bilevel problems to standard bilevel problems for several popular risk measures under discrete distributions. 
In the STRIPE framework, we will also discuss approximated Stackelberg solution where the lower level problem is only assumed to be solved approximately. 
However, this consideration arises from the fact that the lower level problem is a stochastic programming problem.

The P-A problems consist of an important class of bilevel programs.
For the background on P-A problems and contract theory we refer to the monograph \cite{stole2001lectures} and the references therein.
In particular, the moral hazard issue caused by information asymmetry in P-A problems is known to decrease the performance of contracts when the agent is risk-averse \cite{chade2002risk}.
Recently, in \cite{liu2022mitigating}, we have taken the cyber insurance problem as an example of a class of P-A problems and investigated the role of risk preference design. 
We have proposed a metric to quantify moral hazard and have shown that with the additional degree of freedom of the principal granted by the RPT distribution, the risk design can mitigate moral hazard.
However, \cite{liu2022mitigating} is a framework for a specific application. We will discuss the connections between our STRIPE framework and P-A problems in Section \ref{sec:case}.
Bilevel programs have also been recognized as a useful tool for parameter selection, which is one of the central questions in machine learning, especially meta-learning \cite{vanschoren2018meta}.
In \cite{bennett2006model}, the authors have introduced bilevel optimization to cross-validate for hyper-parameter selection.
In \cite{ye2021difference}, the authors have designed an algorithm for solving bilevel programs with an emphasis on applications related to hyperparameter selection.
The proposed algorithm is particularly powerful when the lower level problem is convex as seen in problems related to support vector machines or least absolute shrinkage and selection operators. 
Our STRIPE framework suits applications related to meta-learning. 
As we will show in Section \ref{sec:case}, each RPT defines a risk-sensitive learning task, and the upper level objective function can be interpreted as a test criterion on the meta-parameter.

\section{Problem Formulation}
\label{sec:problem}
In this section, we first endow a population of decision-makers with what we refer to as RPTs.
Then, we propose the STRIPE problem, a Stackelberg game involving one leader and one follower, to enable the design of the risk preferences.
In this game, we consider the average response from the population  as the follower's action.
The leader finds the optimal distribution of risk preferences of the population to minimize her loss.
The solution concept that we propose extends the standard approximate Stackelberg solution and fits the setting where the follower responds to a random environment.

\subsection{Risk Preference Types}
\label{sec:problem:types}
Consider a population of mass $1$ of decision-makers.
This population faces uncertainties modeled by the probability space $(\Xi, \mathcal{F})$ with the reference probability measure denoted by $P$. 
We use $\xi\in\Xi$ to denote a sampled outcome.
Let $x\in X\subset \mathbb{R}^n$ denote the decision variable of an individual of interest from the population.
We use $Z:=f(x,\xi)$ to denote the random loss of this individual, where 
$Z:\Xi\rightarrow \mathbb{R}$ is a measurable function with the finite $p$-th order moment from the space $\mathcal{Z} :=\mathcal{L}_p(\Xi, \mathcal{F}, P)$. 
The parameter $p$ lives in $[1,+\infty)$.
Note that for notational simplicity, we will occasionally use $Z_x$ to represent $f(x,\xi)$ to emphasize its dependence on $x$.
In this paper, we assume that the loss function $f(\cdot, \xi)$ is convex for all $\xi\in \Xi$.
As we will discuss in later sections that this assumption results in a convex lower level problem.

Since we endow different risk preferences to different DMs in the population, they perceive distinct risks when interacting with the underlying stochasticity.
In particular, each individual from the population is identified with an RPT $\theta\in\Theta\subset\mathbb{R}$.
Each RPT labels a DM with a risk preference which influences the DM's quantification of costs under uncertainty.
In the population setting, the RPT can be interpreted as the nominal risk attitude of a group of individuals.
An RPT $\theta$ is represented by a risk measure $\rho_\theta:\mathcal{Z}\rightarrow \mathbb{R}$.

If not specified otherwise, we proceed with the assumption that for all $\theta\in\Theta$, the risk measure $\rho_\theta$ is a coherent risk measure \cite{artzner1999coherent}.
The definition of coherent risk measures is presented below for completeness.
\begin{definition}
(Coherent risk measures.) A function $\rho:\mathcal{Z}\rightarrow\mathbb{R}$ is referred to as a coherent risk measure if it satisfies the following axioms: \\
\hspace*{1em}(A1) Monotonicity: If $Z, Z'\in\mathcal{Z}$ and $Z\succeq Z'$, then $\rho(Z)\geq \rho(Z')$.\\
\hspace*{1em}(A2) Convexity: 
    $\rho(tZ+(1-t)Z') \leq t\rho(Z)+(1-t)\rho(Z')$
for all $Z, Z'\in\mathcal{Z}$ and $t\in[0,1]$.\\
\hspace*{1em}(A3) Translation equivariance: If $Z\in\mathcal{Z}$ and $a\in\mathbb{R}$, then $\rho(Z+a)=\rho(Z)+a$.\\
\hspace*{1em}(A4) Positive homogeneity: If $Z\in\mathcal{Z}$ and $t\in\mathbb{R}_+$, then $\rho(tZ)=t\rho(Z)$.
\end{definition}
The relation $Z\succeq Z'$ in axiom (A1) means $Z(\xi)\geq Z'(\xi)$ for a.e. $\xi\in\Xi$.

One of the most powerful consequences of coherent risk measures is the dual representation \cite{artzner1999coherent,ruszczynski2006optimization}.
Let $\mathcal{Z}^*:=\mathcal{L}_q(\Xi, \mathcal{F}, P)$ for $q\in (1,\infty]$ denote the dual space of $\mathcal{Z}$, \ie, $\frac{1}{p}+\frac{1}{q}=1$.
Following \cite{artzner1999coherent,ruszczynski2006optimization}, the dual representation the risk resulting from the RPT $\theta$ is
\begin{equation}
    \rho_\theta[f(x,\xi)]=\sup_{\nu \in \mathfrak{m}_\theta}
    \int_{\Xi}f(x,\xi)\nu(\xi)dP(\xi)=\sup_{\nu \in \mathfrak{m}_\theta} \langle\nu, f(x,\xi)\rangle,
    %\mathbb{E}_{\xi \sim \nu}[f(x,\xi)],
    \label{eq:dual representation of individual risk}
\end{equation}
where $\mathfrak{m}_\theta \subset \mathcal{Z}^*$ denotes the dual set associated with the risk measure $\rho_\theta$.
The set $\mathfrak{m}_\theta$ is convex and compact when $p\in[1,+\infty)$ \cite{shapiro2021lectures}. 
Hence, the maximum of (\ref{eq:dual representation of individual risk}) is attainable.
Let $\nu^*_\theta$ denote the argument which attains the maximum given RPT $\theta$.
Then, the follower's cost given a measure $\mu\in\mathcal{Q}$ becomes:
\begin{equation}
        U_{\mu}(x)=\int_{\Theta}\int_{\Xi} f(x,\xi)\nu^*_\theta (\xi)dP(\xi)d\mu(\theta).
        \label{eq:dual representation of follower cost}
\end{equation}
For notational simplicity, we define $F(x,\theta):=\int_{\Xi} f(x,\xi)\nu^*_\theta (\xi)dP(\xi)$.

The following two examples show both cases where the space of RPTs $\Theta$ is a continuum, and $\Theta$ has only finitely many elements.
\begin{example}
\label{eg:1}
(Finite RPT space.) Let $\Theta=\{1,2,3\}$. The risk preferences are $\rho_1(Z)=\mathbb{E}[Z]$, $\rho_2=\esssup [Z]$, and $\rho_3=\sqrt{\mathbb{E}[Z^2]-(\mathbb{E}[Z])^2}$, respectively.
\end{example}
\begin{example}
\label{eg:2}
(RPT space containing parameterized risk measures.) Let $\Theta=(0,1)$. A risk preference of type $\theta\in (0,1)$ is represented by $\rho_\theta(Z)=\text{AV@R}_\theta (Z)$.
\end{example}

Note that a finite type space is commonly used for modeling the population-wise risk preferences. 
Individuals can be classified into subpopulations based on how close their behaviors or decision habits are.
Then, nominal risk preferences can be obtained for each subpopulation through elicitation.

\subsection{Stackelberg Risk Preference Design Problem}
\label{sec:problem:stackelberg}
To model the follower in the STRIPE problem, we consider a distribution over the RPT space $\Theta$.
Specifically, let $(\Theta, \mathcal{G})$ denote the probability space of RPTs, with $\mathcal{Q}$ denoting the set of probability measures on $(\Theta, \mathcal{G})$.
We use a specific probability measure $\mu\in\mathcal{Q}$ to characterize the risk preferences of the population holistically.

The follower in the STRIPE problem is an idiosyncratic player who behaves according to the average response of the population with respect to a given probability measure $\mu\in\mathcal{Q}$. 
Accordingly, the follower's problem is described as follows:
\begin{equation}
    \min_{x\in X}U_\mu(x):=\mathbb{E}_{\theta \sim \mu}\left[ \rho_\theta [f(x,\xi)] \right],
    \label{eq:follower problem}
\end{equation}
where $U_\mu:X\rightarrow \mathbb{R}$ denotes the loss of the follower given that the distribution of RPTs is $\mu\in \mathcal{Q}$.
Problem (\ref{eq:follower problem}) extends a risk-sensitive stochastic programming problem in that the expectation over the RPTs captures the average attitude of the population towards random losses.

The leader in the Stackelberg game can choose to design $\mu$, the distribution of the RPTs.
The leader's problem admits the following form:
\begin{equation}
     \min_{\mu\in\mathcal{Q}}J(\mu, x),
     \label{eq:leader problem}
\end{equation}
where $J:\mathcal{Q}\times X \rightarrow \mathbb{R}$ denotes the loss of the leader.
The function $J$ can take different forms depending on the applications. 
One common form is the separable loss function containing two components as follows:
\begin{equation}
    J(\mu, x):=L(x)+\gamma W_1(\mu,\mu^0),
    \label{eq:leader loss func}
\end{equation}
where the function $L:X \rightarrow \mathbb{R}$ 
is assumed to be Lipschitz continuous with constant $\text{Lip}_L>0$.
In practice, the reasonable choices of the loss function $L$ include the quadratic loss $L(x)=(x-\Bar{x})^2$ representing the misalignment of the follower's action $x$ compared to an anticipated action $\Bar{x}\in X$. 
The Euclidean distance $L(x)=\dd(x,\Bar{x})$ is also a meaningful choice.
The order-$1$ Wasserstein distance $W_1(\mu,\mu^0)$ of probability measures $\mu,\mu^0\in\mathcal{Q}$ in (\ref{eq:leader loss func}) is defined by
\begin{equation*}
    W_1(\mu,\mu^0)= \inf_{\pi\in\Pi(\mu,\mu^0)} 
    \int_{\Theta\times\Theta}\dd(x,y) d\pi(x,y) ,
    \label{eq:order 1 wasserstein distance}
\end{equation*}
where $\Pi(\mu,\mu^0)$ denotes the set of all probability measures on the product space $\Theta\times\Theta$ having marginals $\mu$ and $\mu^0$.
It characterizes the difficulty of changing the distribution of RPTs to $\mu$ from the original distribution $\mu^0$. 
The parameter $\gamma>0$ balances the two terms contributing to the leader's loss.
Note that the misalignment $d(x,\Bar{x})$ depends on the probability measure $\mu$ that the leader picks, since, according to (\ref{eq:follower problem}), the action $x$ of the follower is a reaction to the leader's choice of $\mu$.

Summarizing the follower's and the leader's problems, we arrive at the (pessimistic) STRIPE problem:
\begin{equation}
\begin{aligned}
      \min_{\mu\in\mathcal{Q}}\max_{x\in X}J(\mu, x) & :=  L(x)+\gamma W_1(\mu,\mu^0), \\
      \text{s.t.} \ \ x & \in X^*(\mu)= \argmin_{x\in X}U_\mu(x):=\mathbb{E}_{\theta \sim \mu}\left[ \rho_\theta [f(x,\xi)] \right].
    \label{eq:Stackelberg problem}
\end{aligned}
\end{equation}
Problem (\ref{eq:Stackelberg problem}) can be viewed as a Stackelberg game, where the leader moves first by choosing a distribution of risk preferences $\mu$ from $\mathcal{Q}$. 
Then, the follower acts by solving (\ref{eq:follower problem}) given $\mu$. 
The leader's choice solves (\ref{eq:leader problem}) with the follower's decision from (\ref{eq:follower problem}).
Problem (\ref{eq:Stackelberg problem}) is a simple pessimistic bilevel optimization problem, since the action sets $X$ and $\mathcal{Q}$ are independent.
The pessimism captures the robustness consideration of the maximization in the leader's objective function. 
The maximization suggests that the leader chooses the worst-case scenario when the follower's optimal solution is not a singleton.
In contrast, an optimistic STRIPE can be formulated by considering $\min_{\mu\in\mathcal{Q}, x\in X}J(\mu, x)$ as the leader's problem in (\ref{eq:Stackelberg problem}).

\subsection{Solution Concepts}
\label{sec:problem:solution}
The optimal solution pair $(\mu^*, x^*)$ to problem (\ref{eq:Stackelberg problem}) is called a  Stackelberg equilibrium \cite{bacsar1998dynamic}.
Let $U^*_\mu=\inf \{U_{\mu}(x):x\in X \}$ denote the optimal value of (\ref{eq:follower problem}).
The existence of Stackelberg equilibrium depends on the continuity property of the lower level optimal value function.

\begin{lemma}
\label{lemma:lower semicontinuous}
Assume that (i) the loss function $f(\cdot, \xi)$ is convex for all $\xi\in \Xi$, and (ii) the risk measures $\rho_\theta$ for all $\theta\in\Theta$ satisfy conditions (A1) and (A2). Then, the solution mapping $X^*(\cdot)$ of the follower's problem is lower semicontinuous for all $\mu\in\mathcal{Q}$.
\end{lemma}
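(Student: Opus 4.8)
The plan is to read lower semicontinuity of the set-valued map $\mu\mapsto X^*(\mu)$ through its sequential characterization: for each $\mu^0\in\mQ$, each $x^0\in X^*(\mu^0)$, and each sequence $\mu_k\to\mu^0$ (in the $W_1$ topology already singled out by the leader's cost), I must produce $x_k\in X^*(\mu_k)$ with $x_k\to x^0$. I would reduce this to stability properties of the parametric convex program (\ref{eq:follower problem}) and then apply a perturbation result for its solution map.

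First I would establish the two structural ingredients. Fixing $x_1,x_2\in X$ and $t\in[0,1]$, convexity of $f(\cdot,\xi)$ gives the pointwise (order-$\succeq$) inequality $f(tx_1+(1-t)x_2,\xi)\preceq tf(x_1,\xi)+(1-t)f(x_2,\xi)$; applying monotonicity (A1) and then convexity (A2) to $\rho_\theta$ yields $F(tx_1+(1-t)x_2,\theta)\le tF(x_1,\theta)+(1-t)F(x_2,\theta)$, so each $F(\cdot,\theta)$ is convex and integrating against $\mu$ shows $U_\mu$ is convex, whence $X^*(\mu)$ is a convex set. Continuity in the parameter is essentially free: since $U_\mu(x)=\int_\Theta F(x,\theta)\,d\mu(\theta)$ is \emph{linear} in $\mu$, it is continuous under $W_1$-convergence whenever $F(x,\cdot)$ is Lipschitz on $\Theta$ (Kantorovich--Rubinstein), which, assuming $F$ jointly continuous and equi-Lipschitz in $\theta$, delivers joint continuity of $(\mu,x)\mapsto U_\mu(x)$ together with its convexity in $x$.

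The step I expect to be the main obstacle is the lower (rather than upper) semicontinuity of the solution map itself. With $U$ jointly continuous, $U_\mu$ convex, and the standing conditions ensuring $X^*(\mu)\neq\emptyset$ (e.g.\ compactness of $X$), Berge's maximum theorem already gives upper semicontinuity of $X^*$ and continuity of the value function $U^*_\mu=\inf_{x\in X}U_\mu(x)$ for free; the delicate point is that argmin maps are generically only upper semicontinuous, since a minimizer can jump across a flat face of the objective as $\mu$ varies, so convexity must be used in an essential way. My plan is, given $x^0\in X^*(\mu^0)$, to exploit $U_{\mu_k}(x^0)\to U_{\mu^0}(x^0)=U^*_{\mu^0}$ together with continuity of $U^*_\cdot$ to obtain points near $x^0$ that are $\delta_k$-optimal for $U_{\mu_k}$ with $\delta_k\to0$, and then to retract them to exact minimizers $x_k\in X^*(\mu_k)$ while bounding the displacement through the convex geometry of the level sets. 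Controlling this retraction is exactly where a fully general argument resists: absent strict convexity or a uniqueness/growth condition on $U_{\mu^0}$, closeness of $x_k$ to $x^0$ need not hold, so I anticipate the proof either to invoke such regularity of the lower level problem or to settle for lower semicontinuity of the value function $U^*_\cdot$, which does follow cleanly from the joint continuity and compactness established above.
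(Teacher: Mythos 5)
Your structural groundwork matches the paper's opening move---convexity of $U_\mu$ obtained by composing (A1)--(A2) with the convexity of $f(\cdot,\xi)$ and the linearity of the expectation over $\theta$---and your diagnosis of the crux is exactly right: Berge-type arguments only yield upper semicontinuity of argmin maps, and the lower semicontinuity is the delicate part. But the proposal stops there. You explicitly concede that without strict convexity or a growth condition you cannot retract the $\delta_k$-optimal points to exact minimizers near $x^0$, and you offer to ``settle for'' lower semicontinuity of the value function instead, which is not the claimed statement. As written this is not a proof of the lemma; it is a correct identification of the obstacle followed by an admission that you cannot overcome it, so the gap is genuine.

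The ingredient you are missing is the one the paper uses: pass to the epigraph form $\min\{V : U_\mu(x)\le V\}$ and study the sublevel-set mapping $V \mapsto \{x\in X : U_\mu(x)-V\le 0\}$. Because $U_\mu$ is convex, this mapping has a convex graph (it is the parameterized convex constraint system of Example 5.10 in Rockafellar--Wets), and a graph-convex set-valued mapping is lower (inner) semicontinuous on the interior of its domain by their Theorem 5.9. The paper then identifies $X^*(\mu)$ with the level set at the optimal value and concludes. This sidesteps your sequential retraction argument entirely: no strict convexity, uniqueness, or quadratic growth is invoked; convexity of the graph alone forces inner semicontinuity. (It is fair to observe that the paper's argument directly establishes lower semicontinuity of the level-set mapping in the scalar level $V$ for fixed $\mu$, and the final passage to lower semicontinuity of $X^*(\cdot)$ as a function of $\mu$ is terse---your worry about minimizers jumping across flat faces of the objective is a legitimate one for the argmin map in $\mu$. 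But the graph-convexity mechanism is the device the paper relies on and the one your proposal lacks; without it, or some substitute regularity, your own route cannot be closed.)
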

\begin{proof}
Under the assumptions, we know from \cite{shapiro2021lectures} that $\rho_\theta[f(x, \xi)]$ is convex in $x$. 
Since the expectation is a linear operator, $U_\mu(x)$ is convex for all $\mu\in\mathcal{Q}$.
Then, the follower's problem (\ref{eq:follower problem}) can be expressed in the following epigraph form:
\begin{equation}
\begin{aligned}
   \min_{x\in X, V\in \mathbb{R}} & V \\
   \text{s.t.} & U_\mu(x)\leq V.
   \label{eq:proof:epigraph form}
\end{aligned}
\end{equation}
Let $V^*\in\mathbb{R}$ denote the optimal solution of (\ref{eq:proof:epigraph form}).
For all $V\geq V^*$, the feasible region $\{x| U_\mu(x)-V\leq 0 \}$ takes the form of the parameterized convex constraints as in Example 5.10 of \cite{rockafellar2009variational}.
Hence the feasible region is graph convex.
Then, by Theorem 5.9 of \cite{rockafellar2009variational}, it is lower semicontinuous.
Therefore, the solution set $X^*(\mu)$ of the follower's problem is lower semicontinuous.
\qed
\end{proof}

\begin{theorem}
\label{thm:existence}
The following assertions hold: (i) There exists a global optimistic solution to the STRIPE problem (\ref{eq:Stackelberg problem});
(ii) There exists a global pessimistic solution to the STRIPE problem (\ref{eq:Stackelberg problem}), if the assumptions of Lemma \ref{lemma:lower semicontinuous} hold. 
\end{theorem}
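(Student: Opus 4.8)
The plan is to treat both parts as Weierstrass-type existence arguments: in each case I reduce the bilevel problem to minimizing a scalar value function of $\mu$ over the measure set $\mathcal{Q}$, and then invoke compactness of $\mathcal{Q}$ together with lower semicontinuity of that value function. Two structural facts underlie everything. First, if $\Theta$ is compact then $\mathcal{Q}$, equipped with the topology of weak convergence, is compact by Prokhorov's theorem; I also take $X$ to be compact (or impose coercivity of $U_\mu$ so that the solution sets remain in a fixed compact set). Second, the follower's objective $U_\mu(x)=\int_\Theta \rho_\theta[f(x,\xi)]\,d\mu(\theta)$ is jointly continuous in $(\mu,x)$ on $\mathcal{Q}\times X$: continuity in $x$ comes from the Lipschitz continuity of coherent risk measures on $\mathcal{Z}$ composed with the continuity of $f(\cdot,\xi)$, while continuity in $\mu$ is the weak continuity of the integral functional. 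These ingredients are the workhorses for both assertions.

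For assertion (i), the optimistic problem is $\min\{J(\mu,x): \mu\in\mathcal{Q},\, x\in X^*(\mu)\}$, i.e.\ a minimization of $J$ over the graph $\mathrm{gph}\,X^*=\{(\mu,x): x\in X^*(\mu)\}$. First I would invoke Berge's maximum theorem: since $U$ is jointly continuous and $X$ is compact, the optimal value $U^*_\mu=\min_{x\in X}U_\mu(x)$ is continuous in $\mu$ and the $\argmin$ mapping $X^*$ is outer semicontinuous with compact values, so its graph is closed. As a closed subset of the compact product $\mathcal{Q}\times X$, the set $\mathrm{gph}\,X^*$ is therefore compact. The objective $J(\mu,x)=L(x)+\gamma W_1(\mu,\mu^0)$ is continuous---hence lower semicontinuous---on this set, because $L$ is Lipschitz and $W_1(\cdot,\mu^0)$ is (weakly) continuous on the compact type space. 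Weierstrass's theorem then yields a minimizer $(\mu^*,x^*)$, a global optimistic solution. Note that this argument uses only joint continuity of $U$, not convexity, which is why (i) needs no appeal to Lemma~\ref{lemma:lower semicontinuous}.

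For assertion (ii), I introduce the pessimistic value function $\varphi(\mu):=\max_{x\in X^*(\mu)}J(\mu,x)$; the inner maximum is attained because $X^*(\mu)$ is a closed subset of the compact set $X$ and $J(\mu,\cdot)$ is continuous. The problem becomes $\min_{\mu\in\mathcal{Q}}\varphi(\mu)$, so by Weierstrass it suffices to prove that $\varphi$ is lower semicontinuous on the compact set $\mathcal{Q}$. This is where Lemma~\ref{lemma:lower semicontinuous} enters, via the standard supremal-marginal rule: if the feasible-set mapping is lower (inner) semicontinuous and the integrand is lower semicontinuous, then the supremal value is lower semicontinuous. Concretely, fix $\mu_k\to\mu$ and any $\bar{x}\in X^*(\mu)$; by lower semicontinuity of $X^*$ there exist $x_k\in X^*(\mu_k)$ with $x_k\to\bar{x}$, whence $\liminf_k\varphi(\mu_k)\ge\liminf_k J(\mu_k,x_k)\ge J(\mu,\bar{x})$ using continuity of $J$. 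Taking the supremum over $\bar{x}\in X^*(\mu)$ gives $\liminf_k\varphi(\mu_k)\ge\varphi(\mu)$, i.e.\ lower semicontinuity. Minimizing the lower semicontinuous $\varphi$ over the compact $\mathcal{Q}$ produces a global pessimistic solution.

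The main obstacle is the lower semicontinuity of the pessimistic value function $\varphi$, and this is exactly what dictates the asymmetry between the two assertions. The optimistic case needs only the \emph{closedness} (outer semicontinuity) of $\mathrm{gph}\,X^*$, which Berge's theorem supplies from joint continuity alone; but the pessimistic inner maximum can drop discontinuously unless the solution set is \emph{inner} semicontinuous, so that every optimal point of the limit problem can be approximated along the sequence. Lemma~\ref{lemma:lower semicontinuous}---proved from convexity of $f(\cdot,\xi)$ and the coherence axioms (A1),(A2)---is the precise hypothesis securing this inner semicontinuity, which is why part (ii) is conditioned on its assumptions while part (i) is not. A secondary point to verify carefully is the weak continuity of $\mu\mapsto U_\mu(x)$, which requires $\theta\mapsto\rho_\theta[f(x,\xi)]$ to be bounded and continuous on the compact space $\Theta$ so that $U$ is genuinely jointly continuous.
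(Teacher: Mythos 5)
Your argument is correct, and it is in substance the same route the paper takes --- the difference is that the paper disposes of both assertions in two lines by citing Theorems 5.2 and 5.3 of Dempe's \emph{Foundations of Bilevel Programming}, whereas you reconstruct the proofs of those cited theorems from first principles. Your part (i) is exactly the standard argument behind the optimistic existence result: Berge's maximum theorem gives outer semicontinuity of $X^*$, hence a closed (and, in the compact product, compact) graph, and Weierstrass finishes. Your part (ii) is the standard argument behind the pessimistic result: inner (lower) semicontinuity of $X^*$ --- precisely what Lemma~\ref{lemma:lower semicontinuous} supplies --- yields lower semicontinuity of the pessimistic value function $\varphi$ via the supremal-marginal rule, and Weierstrass applies again. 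You also correctly identify the asymmetry between the two parts (closed graph suffices for optimism, inner semicontinuity is needed for pessimism), which is exactly why the paper conditions only assertion (ii) on Lemma~\ref{lemma:lower semicontinuous}.

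What your write-up buys over the paper's citation is that it surfaces the hypotheses the paper leaves implicit: compactness of $\mathcal{Q}$ in the weak topology (which requires $\Theta$ compact, via Prokhorov), compactness of $X$, joint continuity of $(\mu,x)\mapsto U_\mu(x)$, and weak continuity of $\mu\mapsto W_1(\mu,\mu^0)$. These are genuinely needed for the cited theorems to apply and are only loosely covered by the paper's phrase ``the action sets are nonempty and compact,'' so making them explicit is a small but real improvement rather than a deviation in method.
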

\begin{proof}
Since the lower level problem is unconstrained and the action sets are nonempty and compact, the existence of optimistic solution follows from Theorem 5.2 of \cite{dempe2002foundations}. The existence of pessimistic solution follows from Theorem 5.3 of \cite{dempe2002foundations} under the assumptions of Lemma \ref{lemma:lower semicontinuous}.
\qed
\end{proof}

\paragraph{Robust Approximate Stackelberg Equilibrium}
In standard Stackelberg games \cite{bacsar1998dynamic}, both the Stackelberg solution and the approximate Stackelberg solution assume that the follower's problem can be solved exactly.
This assumption is restrictive for the STRIPE problem.
A closed-form expression is not often obtainable for the expectation of the random cost function in a stochastic programming problem.
The solution to the follower’s stochastic programming problem relies on the commonly-used Sample Average Approximation (SAA) technique \cite{shapiro2021lectures}, which results in an approximate solution. 
This relaxation of the follower’s problem can model the limited computational power of the follower as well as its bounded rationality in decision-making. 
The relaxation can also be anticipated by the leader, who seeks to make robust decisions that accommodate the uncertainties from the follower’s solution.  
Hence, it motivates the following solution concept for the STRIPE problem.

For a given number $\epsilon\geq0$, let $X^\epsilon:\mathcal{Q}\rightrightarrows X$ defined by $X^\epsilon(\mu):=\{x\in X| U_\mu(x)\leq U^*_{\mu}+\epsilon\}$ denote the $\epsilon$-optimal set of the follower's problem given $\mu$.

\begin{definition}
\label{def:extended stackelberg solution}
($\epsilon$-Robust $\delta$-Approximate Stackelberg Equilibrium.)
Let $\epsilon > 0$ and $\delta > 0$ be given numbers.
A pair $(\hat{x}, \hat{\mu})\in  X \times \mathcal{Q}$ is called an $\epsilon$-robust $\delta$-approximate Stackelberg equilibrium of the Stackelberg game described by the leader's problem (\ref{eq:leader problem}) and the follower's problem (\ref{eq:follower problem}), if the follower's response is an $\epsilon$-optimal solution to (\ref{eq:follower problem}) given the leader's action, \ie, 
\begin{equation}
    \hat{x}\in X^\epsilon(\hat{\mu}),
    \label{eq:epsilon approximate follower action}
\end{equation}
and the leader's action satisfies
\begin{equation}
    \sup_{x\in X^\epsilon(\hat{\mu})}J(\hat{\mu},x)
    \leq \inf_{\mu \in \mathcal{Q}}\sup_{x\in X^\epsilon(\mu)}J(\mu,x)+\delta.
    \label{eq:delta Stackelberg leader action}
\end{equation}
\end{definition}
Since $\epsilon>0$ leads to the fact that the follower's response $\hat{x}$ may be suboptimal, we refer to the parameter $\epsilon$ in Definition \ref{def:extended stackelberg solution} as the ambiguity tolerance of the leader toward the follower's responses.
The parameter $\delta$ in Definition \ref{def:extended stackelberg solution} characterizes the leader's optimality compromise in obtaining an approximate Stackelberg solution.
Definition \ref{def:extended stackelberg solution} reduces to the standard $\delta$-approximate Stackelberg solution defined in \cite{bacsar1998dynamic} if we set $\epsilon=0$, or, equivalently, if we require $\hat{x}\in X^*(\hat{\mu})$.
Note that the existence of an $\epsilon$-robust $\delta$-approximate Stackelberg equilibrium in problem (\ref{eq:Stackelberg problem}) follows directly from Theorem \ref{thm:existence}.
For the limiting behavior of the approximate Stackelberg equilibrium of Definition \ref{def:extended stackelberg solution}, we refer to \cite{lin2014solving}.

\begin{remark}
\label{remark:optimistic counterpart}
    We have considered the pessimistic perspective in Definition \ref{def:extended stackelberg solution}. The optimistic counterpart of Definition \ref{def:extended stackelberg solution} can be established by replacing condition (\ref{eq:delta Stackelberg leader action}) with $\inf_{x\in X^\epsilon(\hat{\mu})}J(\hat{\mu},x)\leq\inf_{\mu\in\mathcal{Q}, x\in X^\epsilon(\mu)}J(\mu,x)+\delta$. 
    We call the solution obtained under this definition an $\epsilon$-Optimistic $\delta$-Approximate Stackelberg Equilibrium.
    It assumes that the leader has more control over the follower's action, and it does not lead to robustness of the equilibrium solutions. We will revisit it in the contract design problem in Section \ref{sec:case:contract}. In the sequel, unless otherwise stated, we follow Definition \ref{def:extended stackelberg solution} for the game-theoretic analysis.
\end{remark}

\section{Game Analysis}
\label{sec:analysis}
In this section, we study the STRIPE problem from two different aspects.
In the first subsection, we investigate the estimation of the solution to the STRIPE problem assuming the existence of an anticipated RPT distribution and its induced anticipated follower's decision.
In the second subsection, we elaborate the connection between the leader's ambiguity tolerance and optimality compromise.

\subsection{Estimation of Solution}
\label{sec:estimation}

Let $\Bar{\mu}$ denote an RPT distribution that leads to anticipated decisions $\Bar{x}$ in the robust sense, \ie, any $\Bar{x}$ such that $\Bar{x}\in X^{\epsilon}(\Bar{\mu})$ is considered acceptable by the leader.
From the definition of the original RPT distribution $\mu^0$, we observe that the leader naturally  trades off between choosing an RPT distribution that helps approaching the anticipated outcome and one which is not too costly to achieve.
In this scenario, a convex combination of $\Bar{\mu}$ and $\mu^0$, \ie, $\mu=r\Bar{\mu}+(1-r)\mu^0$ with $r\in(0,1)$, is a reasonable candidate choice.
A natural question is how the distance between $\mu^0$ and $\Bar{\mu}$ plays the role in the solution to the STRIPE problem and the selection of the parameter $r$.

We start the analysis by referring to the quantity $W:=W_1(\Bar{\mu}, \mu^0)$ as the primitive risk perception gap of the population.
The existence of a positive primitive risk perception gap infers that there is a space for designing the distribution of RPTs, since the population's current risk perception is suboptimal from the perspective of the leader.
This eliminates the trivial case where the existing distribution of the RPTs $\mu^0$ and the anticipated one $\Bar{\mu}$ coincide with each other.
Hence, we assume throughout the discussion that $W_1(\Bar{\mu}, \mu^0)>0$.

Our approach is based on the distances of the $\epsilon$-optimal sets when the choice of $\mu$ varies.
Hence, we adopt the following standard notations.
We use $\DD(a, A)$ to denote the distance from a point $a$ to a set $A$.
For two compact sets $A$ and $B$, we express the deviation of $A$ from $B$ as:
\begin{equation}
    \mathbb{D}(A,B):=\sup_{a\in A}\DD(a, B).
    \label{eq:deviation of sets}
\end{equation}
We focus on the quantity $\mathbb{D}(X^\epsilon(\mu),X^\epsilon(\Bar{\mu}))$ and refer to it as the disappointment of the follower's action in the worst-case scenario when the leader's choice of RPT distribution deviates from $\Bar{\mu}$.
The reason lies in the following.
From Definition \ref{def:extended stackelberg solution}, we observe that $X^\epsilon(\Bar{\mu})$ is the set of follower's actions which the leader can accept according to the $\epsilon$-robustness.
Then, the deviation $\mathbb{D}(X^\epsilon(\mu),X^\epsilon(\Bar{\mu}))$ measures the largest possible distance from $X^\epsilon(\mu)$, which contains all the potential follower's actions obeying (\ref{eq:epsilon approximate follower action}) given the leader's choice $\mu$, to the acceptable follower's action in $X^\epsilon(\Bar{\mu})$.

The following result provides an estimation of the $\epsilon$-robust $\delta$-approximate Stackelberg solution of problem (\ref{eq:Stackelberg problem}) using the primitive risk perception gap $W_1(\Bar{\mu}, \mu^0)$, assuming the knowledge of $\Bar{\mu}$.

\begin{proposition}
\label{prop:bounds on extended solution}
Suppose that the second-order growth condition holds for the follower's problem , \ie, there exists $\iota>0$, such that 
\begin{equation}
    U_{\Bar{\mu}}(x)\geq\min_{x\in X}U_{\Bar{\mu}}(x)+\iota\cdot \DD(x,X^*(\Bar{\mu}))^2, \forall x\in X,
    \label{eq:second-order growth condition}
\end{equation}
then, 
\begin{equation}
     \mathbb{D}(X^\epsilon(\mu),X^\epsilon(\Bar{\mu}))
      \leq 
      \sqrt{\frac{3}{\iota}W_1(\mu, \Bar{\mu})}.
    \label{eq:bound 1 on extended solution}
\end{equation}
Furthermore, for $\mu=r\Bar{\mu}+(1-r)\mu^0$ and $x\in X^\epsilon(\mu)$, the performance reduction observed by the leader described by $\sup_{x\in X^\epsilon(\mu)}\inf_{\Bar{x}\in X^\epsilon(\Bar{\mu})}|L(x)-L(\Bar{x})|$ satisfies
\begin{equation}
    \sup_{x\in X^\epsilon(\mu)}\inf_{\Bar{x}\in X^\epsilon(\Bar{\mu})}|L(x)-L(\Bar{x})|
    \leq
    \text{Lip}_L\sqrt{\frac{3}{\iota}(1-r)W}.
    \label{eq:performance reduction}
\end{equation}

\end{proposition}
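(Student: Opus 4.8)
The plan is to establish the set-deviation bound \eqref{eq:bound 1 on extended solution} first, and then obtain the performance-reduction estimate \eqref{eq:performance reduction} as a short corollary combining the Lipschitz continuity of $L$ with the convexity of the Wasserstein distance. The engine of \eqref{eq:bound 1 on extended solution} is a standard perturbation argument for optimal sets: convert a uniform objective-approximation error into a value gap, feed that gap through the second-order growth condition to bound distances to the exact solution set, and use convexity to relax the exact set to the $\epsilon$-optimal set.

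Concretely, for \eqref{eq:bound 1 on extended solution} I would proceed in four steps. First, write $U_\mu(x)=\int_\Theta F(x,\theta)\,d\mu(\theta)$ and, using that $\theta\mapsto F(x,\theta)$ is Lipschitz in the type uniformly over $x\in X$, apply Kantorovich--Rubinstein duality to get
\begin{equation*}
\eta:=\sup_{x\in X}\abs{U_\mu(x)-U_{\Bar{\mu}}(x)}\leq W_1(\mu,\Bar{\mu}).
\end{equation*}
Second, propagate this into a value gap: for $x\in X^\epsilon(\mu)$, combining $U_\mu(x)\leq U^*_\mu+\epsilon$ with $\abs{U^*_\mu-U^*_{\Bar{\mu}}}\leq\eta$ (itself immediate from the uniform estimate) gives $U_{\Bar{\mu}}(x)-U^*_{\Bar{\mu}}\leq\epsilon+2\eta$. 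Third, invoke the second-order growth condition \eqref{eq:second-order growth condition} to turn this into a distance estimate, $\DD(x,X^*(\Bar{\mu}))\leq\sqrt{(\epsilon+2\eta)/\iota}$. Fourth, use convexity of $U_{\Bar{\mu}}$ (established in the proof of Lemma \ref{lemma:lower semicontinuous}) to retract $x$ along the segment toward its projection onto $X^*(\Bar{\mu})$: convexity forces the objective below the $\epsilon$-threshold after traveling a fraction $(D-\epsilon)/D$ of the way, where $D=U_{\Bar{\mu}}(x)-U^*_{\Bar{\mu}}$, so that $\DD(x,X^\epsilon(\Bar{\mu}))\leq\frac{D-\epsilon}{\sqrt{\iota D}}$ with $D\leq\epsilon+2\eta$. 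Maximizing this over the admissible range of $D$ removes the explicit $\epsilon$, and taking the supremum over $x\in X^\epsilon(\mu)$ produces \eqref{eq:bound 1 on extended solution}.

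The performance-reduction bound \eqref{eq:performance reduction} then follows quickly. Lipschitz continuity of $L$ gives, for each $x$, $\inf_{\Bar{x}\in X^\epsilon(\Bar{\mu})}\abs{L(x)-L(\Bar{x})}\leq\text{Lip}_L\,\DD(x,X^\epsilon(\Bar{\mu}))$, and taking the supremum over $x\in X^\epsilon(\mu)$ reduces the left-hand side of \eqref{eq:performance reduction} to $\text{Lip}_L\,\mathbb{D}(X^\epsilon(\mu),X^\epsilon(\Bar{\mu}))$. Specializing to $\mu=r\Bar{\mu}+(1-r)\mu^0$ and using the joint convexity of $W_1$ (or a direct coupling that transports the mass at zero cost with probability $r$), $W_1(\mu,\Bar{\mu})\leq(1-r)W_1(\mu^0,\Bar{\mu})=(1-r)W$, so applying \eqref{eq:bound 1 on extended solution} delivers the claimed rate $\text{Lip}_L\sqrt{\frac{3}{\iota}(1-r)W}$.

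The main obstacle is the first step. Because the statement is phrased in terms of the Wasserstein-$1$ distance, one genuinely needs $\theta\mapsto F(x,\theta)=\int_\Xi f(x,\xi)\nu^*_\theta(\xi)\,dP(\xi)$ to be Lipschitz in the type \emph{uniformly} in $x\in X$; this requires controlling how the dual maximizer $\nu^*_\theta$ of the coherent risk measure $\rho_\theta$ varies with $\theta$, which is where the structural assumptions on the risk-preference parameterization must do the work. The convexity retraction in the fourth step is elementary, but the constant-tracking is delicate, and it is precisely the interplay between the $\epsilon+2\eta$ value gap, the growth inequality, and the convexity argument that fixes the constant appearing in \eqref{eq:bound 1 on extended solution}.
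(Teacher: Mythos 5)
Your proof is correct in substance but takes a genuinely different route to the set-deviation bound than the paper does. You argue \emph{forward}: for $x\in X^\epsilon(\mu)$ the uniform gap $\eta$ gives $U_{\Bar{\mu}}(x)-U^*_{\Bar{\mu}}\leq \epsilon+2\eta$, the growth condition converts this to $\DD(x,X^*(\Bar{\mu}))\leq\sqrt{(\epsilon+2\eta)/\iota}$, and a convexity retraction along the segment toward the projection lands you in $X^\epsilon(\Bar{\mu})$ after a fraction $(D-\epsilon)/D$ of the way; maximizing $(D-\epsilon)/\sqrt{\iota D}$ over $D\in[\epsilon,\epsilon+2\eta]$ indeed gives $2\eta/\sqrt{\iota(\epsilon+2\eta)}\leq\sqrt{2\eta/\iota}$, so the $\epsilon$ cancels and you actually obtain the sharper constant $\sqrt{2/\iota}$, which implies the stated $\sqrt{3/\iota}$. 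The paper instead argues by \emph{contraposition}: any $x$ with $\DD(x,X^\epsilon(\Bar{\mu}))>\alpha$ satisfies $U_{\Bar{\mu}}(x)-U^*_{\Bar{\mu}}\geq\iota\alpha^2$ (using only the inclusion $X^*(\Bar{\mu})\subset X^\epsilon(\Bar{\mu})$, no retraction), hence for $\alpha=\sqrt{3\eta/\iota}$ its $U_\mu$-suboptimality exceeds $\epsilon$ and it cannot lie in $X^\epsilon(\mu)$. The trade-off is that your retraction requires convexity of $U_{\Bar{\mu}}$ (available from the standing convexity of $f(\cdot,\xi)$ and coherence of $\rho_\theta$, via Lemma \ref{lemma:lower semicontinuous}) and convexity of $X$ (which the paper only assumes later, in (M1) of Lemma \ref{lemma:metric}, not in the hypotheses of this proposition), whereas the paper's contrapositive argument needs neither and works for the constant $3$; in exchange your argument buys a slightly better constant. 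The obstacle you flag in your first step is real and is shared by the paper: the Kantorovich--Rubinstein bound $\sup_x|U_\mu(x)-U_{\Bar{\mu}}(x)|\leq W_1(\mu,\Bar{\mu})$ requires $\theta\mapsto F(x,\theta)$ to be $1$-Lipschitz uniformly in $x$, and the paper's own passage to a supremum over ``all locally Lipschitz $h$'' is exactly as informal on this point as you suspect. Your treatment of \eqref{eq:performance reduction} via $\mathrm{Lip}_L$ and the convexity bound $W_1(r\Bar{\mu}+(1-r)\mu^0,\Bar{\mu})\leq(1-r)W$ is identical to the paper's.
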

\begin{proof}
We prove the assertion by connecting $\mathbb{D}(X^\epsilon(\mu),X^\epsilon(\Bar{\mu}))$ with $W_1(\mu, \Bar{\mu})$.
We first make the following observation. 
The relation $\mathbb{D}(X^\epsilon(\mu),X^\epsilon(\Bar{\mu}))\leq \alpha$ is equivalent to $D(x,X^\epsilon(\Bar{\mu}))\leq \alpha$ for all $ x\in X^\epsilon(\mu)$.
The later is equivalent to its contraposition which states that if $D(x,X^\epsilon(\Bar{\mu}))>\alpha$ then $x \notin X^\epsilon(\mu)$.
This contraposition is again equivalent to requiring that if for all $x\in X$, the condition $D(x,X^\epsilon(\Bar{\mu}))>\alpha$ holds, then there exists $x'\in X$ such that $U_{\mu}(x)-U_{\mu}(x')>\epsilon$.
Now, suppose that $\sup_{x\in X}|U_\mu(x)-U_{\Bar{\mu}}(x)|\leq \beta$ holds.
Then, for all $x\in X$ such that $D(x,X^\epsilon(\Bar{\mu}))>\alpha$ and $\forall x^\epsilon\in X^\epsilon(\Bar{\mu})$, the following holds:
\begin{equation}
        U_\mu(x)-U_\mu(x^\epsilon)\geq U_{\Bar{\mu}}(x)-U_{\Bar{\mu}}(x^\epsilon)-2\beta.
        \label{eq:proof:2beta ineq}
\end{equation}
Let
\begin{equation*}
    R^\epsilon(\alpha)=\inf_{d(x,X^\epsilon(\Bar{\mu}))>\alpha}
    \left( U_{\Bar{\mu}}(x)-U^*_{\Bar{\mu}} \right)-\epsilon.
\end{equation*}
Clearly, $R^\epsilon(\alpha)>0$.
Choose $\beta=\frac{1}{3}R^\epsilon(\alpha)$, then the inequality (\ref{eq:proof:2beta ineq}) leads to:
\begin{equation*}
    \begin{aligned}
          U_\mu(x)-U_\mu(x^\epsilon)& \geq R^\epsilon(\alpha)+\epsilon-\frac{2}{3}R^\epsilon(\alpha) \\
          & \geq \epsilon+\frac{1}{3}R^\epsilon(\alpha) \\
          & > \epsilon.
    \end{aligned}
\end{equation*}
Hence, $\mathbb{D}(X^\epsilon(\mu),X^\epsilon(\Bar{\mu}))\leq\alpha$ if the condition $\sup_{x\in X}|U_\mu(x)-U_{\Bar{\mu}}(x)|\leq \beta$ holds.
Moreover, the second-order growth condition (\ref{eq:second-order growth condition}) leads to 
\begin{equation*}
    U_{\Bar{\mu}}(x)-U^*_{\Bar{\mu}}\geq \iota \cdot D(x,X^*(\Bar{\mu}))^2, \forall x\in X.
\end{equation*}
Since $X^*(\Bar{\mu})\subset X^\epsilon(\Bar{\mu})$, we know that $D(x, X^*(\Bar{\mu})) \geq D(x, X^\epsilon(\Bar{\mu}))$.
Therefore, for all $x\in X$ such that $D(x,X^\epsilon(\Bar{\mu}))\geq \alpha$, the following relation holds:
\begin{equation*}
          U_{\Bar{\mu}}-U^*_{\Bar{\mu}} \geq \iota\cdot D(x, X^*(\Bar{\mu}))^2
          \geq \iota\cdot D(x, X^\epsilon(\Bar{\mu}))^2 
          \geq \iota \cdot \alpha^2.
\end{equation*}
Then, we obtain $R^\epsilon(\alpha)+\epsilon=\iota\cdot \alpha^2$. 
Choose $\alpha=\sqrt{\frac{3}{\iota}\sup_{x\in X}|U_{\Bar{\mu}}(x)-U_\mu(x)|}$,
Then, for all $x\in X$ such that $D(x, X^\epsilon(\Bar{\mu}))\geq \alpha$ and for all $x^\epsilon \in X^\epsilon(\Bar{\mu})$, the following holds:
\begin{equation*}
    \begin{aligned}
          U_\mu(x)-U_\mu(x^\epsilon)& \geq U_{\Bar{\mu}}(x)-U_{\Bar{\mu}}(x^\epsilon)-2\sup_{x\in X}|U_{\Bar{\mu}}(x)-U_\mu(x)| \\
          & \geq R^\epsilon(\alpha)+\epsilon-2\sup_{x\in X}|U_{\Bar{\mu}}(x)-U_\mu(x)| \\
          & \geq \sup_{x\in X}|U_{\Bar{\mu}}(x)-U_\mu(x)|\\
          & \geq 0.
    \end{aligned}
\end{equation*}
Therefore, 
\begin{equation*}
     \mathbb{D}(X^\epsilon(\mu),X^\epsilon(\Bar{\mu}))\leq \sqrt{\frac{3}{\iota}\sup_{x\in X}|U_{\Bar{\mu}}(x)-U_\mu(x)|}.
\end{equation*}
Let $H(\Theta)$ denote the set of all locally Lipschitz functions  $h:\Theta\rightarrow \mathbb{R}$.
Then, we have the following inequality:
\begin{equation*}
    \begin{aligned}
          \sup_{x\in X}|U_{\Bar{\mu}}(x)-U_\mu(x)| & =\sup_{x\in X}| \int_{\Theta}F(x,\theta)d\Bar{\mu}(\theta)-\int_{\Theta}F(x,\theta)d\mu(\theta) | \\
          &
          \leq  \sup_{h \in H(\Theta)}|\int_{\Theta}hd\Bar{\mu}-\int_{\Theta}hd\mu| \\
        &=  W_1(\Bar{\mu}, \mu).
    \end{aligned}
\end{equation*}
Hence, we arrive at (\ref{eq:bound 1 on extended solution}).
The inequality (\ref{eq:performance reduction}) is obtained by applying the convexity of the order-$1$ Wasserstein distance and the Lipschitz property of $L(\cdot)$.
This completes the proof.
\qed
\end{proof}
The inequality (\ref{eq:performance reduction}) 
allows us to estimate the approximate solution to the STRIPE problem given a candidate measure $\mu$ since the design cost measured by $W_1(\mu,\mu^0)$ can be obtained directly.

\begin{remark}
The consideration of $\mathbb{D}(X^\epsilon(\mu),X^\epsilon(\Bar{\mu}))$ and $\sup_{x\in X^\epsilon(\mu)}\inf_{\Bar{x}\in X^\epsilon(\Bar{\mu})}|L(x)-L(\Bar{x})|$ in Proposition \ref{prop:bounds on extended solution} is in line with Definition \ref{def:extended stackelberg solution}.
The stochastic nature of the follower's problem (\ref{eq:follower problem}) makes the set-valued map $X^\epsilon(x)$ a meaningful target to investigate.
Both of $\mathbb{D}(X^\epsilon(\mu),X^\epsilon(\Bar{\mu}))$ and $\sup_{x\in X^\epsilon(\mu)}\inf_{\Bar{x}\in X^\epsilon(\Bar{\mu})}|L(x)-L(\Bar{x})|$ take into account the $\epsilon$-robustness.
\end{remark}
\begin{remark}
The second-order growth condition (\ref{eq:second-order growth condition}) has been widely used in the stability analysis of optimization problems. Early works include \cite{bonnans1995quadratic,shapiro1994quantitative}. 
The result in Proposition \ref{prop:bounds on extended solution} builds on the results of \cite{liu2013stability,pichler2018quantitative} and extends to $\epsilon$-optimal sets in terms of Definition \ref{def:extended stackelberg solution}.
\end{remark}

\subsection{Connection Between Ambiguity Tolerance and Optimality Compromise}
\label{sec:asymmetry}

The equilibrium strategies obtained from (\ref{eq:epsilon approximate follower action}) and (\ref{eq:delta Stackelberg leader action}) depend heavily on the selections of ambiguity tolerance and optimality compromise.
On the one hand, ambiguity tolerance and optimality compromise are the design parameters in Definition \ref{def:extended stackelberg solution} that one needs to choose before solving STRIPE.
On the other hand, there is an interdependency between them.
We observe from (\ref{eq:delta Stackelberg leader action}) that an increase in ambiguity tolerance leads to an enlarged feasible set of $\sup_{x\in X^\epsilon (\mu)}J(\mu,x)$.
Then, for $\forall \mu\in\mathcal{Q}$ and $\epsilon_1\leq \epsilon_2$, one observes that $\sup_{x_1\in X^{\epsilon_1}(\mu)}J(\mu, x_1)\leq \sup_{x_2\in X^{\epsilon_2}(\mu)}J(\mu, x_2)$.
Hence, the value of the game satisfies $\inf_{\mu\in \mathcal{Q}}\sup_{x_1\in X^{\epsilon_1}(\mu)}J(\mu, x_1)\leq \inf_{\mu\in \mathcal{Q}}\sup_{x_2\in X^{\epsilon_2}(\mu)}J(\mu, x_2)$ for $\epsilon_1\leq\epsilon_2$.
Suppose that the leader compromises a fixed percentage of the value of the game. 
Then, her optimality compromises $\delta_1$ and $\delta_2$ under ambiguity tolerances $\epsilon_1$ and $\epsilon_2$ should satisfy $\delta_1\leq \delta_2$.
In the case of $\epsilon$-Optimistic $\delta$-Approximate Stackelberg Equilibrium stated in Remark \ref{remark:optimistic counterpart}, we arrive at the opposite relation.

In the following, we investigate the connections between the ambiguity tolerance and the optimality compromise to aid their selections. 
In particular, our goal is to find an upper estimate of the optimality compromise induced by one's choice of ambiguity tolerance.
We leverage mathematical tools from set-valued analysis and study the behavior of the mapping $X^\epsilon(\cdot)$.

For a general set-valued map $S:A\rightrightarrows B$ with graph $\text{gph}S:=\{(a,b)\in A\times B| b\in S(a)\}$, its inverse map $S^{-1}:B\rightrightarrows A$ is defined by $\text{gph}S^{-1}:=\{(b,a)\in B\times A | (a,b)\in \text{gph}S \}$.
Note that the inverse of a set-valued is always well-defined \cite{aubin2009set,mordukhovich2006variational}.
To study the leader's choice when a solution of the follower is given, we leverage the following Lipschitzian property defined for set-valued maps.

\begin{definition}
\label{def:metric regularity}
(Metric regularity.)
A set-valued map $S:A\rightrightarrows B$ is said to be metrically regular around a point $(\Bar{a},\Bar{b})\in \text{gph}S$ with constant $r\in\mathbb{R}_+$, if there are neighborhoods $\mathcal{N}_A$ of $\Bar{a}$ and $\mathcal{N}_B$ of $\Bar{b}$ such that
\begin{equation}
    D(a,S^{-1}(b))\leq r\cdot D(b,S(a)), \forall a\in \mathcal{N}_A , \forall b\in \mathcal{N}_B.
    \label{eq:metric regularity}
\end{equation}
\end{definition}
We denote by $\text{reg}S(\Bar{a},\Bar{b})$ the infimum of the constant $r$ which satisfies condition (\ref{eq:metric regularity}) among all choices of neighborhoods $\mathcal{N}_A$ and $\mathcal{N}_B$.
Condition (\ref{eq:metric regularity}) is a Lipschitz-type inequality for the inverse map $S^{-1}$.
It is well-known that a set-valued map $S$ is metrically regular if and only if its inverse set-valued map $S^{-1}$ has the Aubin property \cite{rockafellar2009variational,mordukhovich2006variational}, which is an extension of the Lipschitz property of single-valued maps to set-valued maps \cite{aubin2009set}.

Necessary and sufficient conditions guaranteeing the metric regularity of set-valued maps $S:A\rightrightarrows B$ between Banach space has been derived in, for example, \cite{artacho2010metric,mordukhovich2006variational}.
The results are based on the essential surjectivity requirement of the linear operator between the spaces described in Theorem 3.3 of \cite{artacho2010metric}.
In our case, as we will see later, this assumption does not hold.
Instead, we will focus on the restrictive metric regularity \cite{mordukhovich2004restrictive} which is the same Lipschitz-type property of $S$ as stated in (\ref{eq:metric regularity}) but restricted to the image $S(A)\subset B$.
We will show that the $\epsilon$-solution set $X^\epsilon(\cdot)$ of the follower's problem is restrictive metrically regular. 

Consider the following auxiliary problem.
Let $\mathcal{M}(\Theta)$ denote the set of all finite signed Borel measures on $\Xi$.
Consider problem (\ref{eq:follower problem}) with the set of parameters $\Tilde{\mu}\in \mathcal{M}(\Theta)$,  \ie,
\begin{equation}
     \min_{x\in X}\Tilde{U}_{\Tilde{\mu}}(x):=\mathbb{E}_{\theta \sim \Tilde{\mu}}\left[ \rho_\theta [f(x,\xi)] \right].
    \label{eq:extended follower problem}
\end{equation}
Let $\Tilde{U}^*_{\Tilde{\mu}}$ and $\Tilde{X}^\epsilon(\Tilde{\mu})$ denote the counterparts of $U^*_\mu$ and $X^\epsilon(\mu)$ when the measures $\Tilde{\mu}$ are Borel measures from $\mathcal{M}(\Theta)$.
Accordingly, the order-$1$ Wasserstein distance  between elements of $\mathcal{M}(\Theta)$ can be defined similarly to (\ref{eq:order 1 wasserstein distance}).

Here, we note the fact that the Wasserstein distance of order $1$ can be extended to a norm when $\mathcal{M}(\Theta)$ is of interest. 
Let $\mu_+$ and $\mu_-$ denote the positive and negative variations of $\mu\in\mathcal{M}(\Theta)$, respectively.
Let $\text{Var}(\mu)=|\mu|(\Theta)$ denote the total variation of $\mu$, where $|\mu|=\mu_+ + \mu_-$.
Let $\mathcal{M}_0(\Theta)$ be the set of measures defined on $\Xi$ such that any $\mu\in\mathcal{M}_0(\Theta)$ satisfies $\mu(\Xi)=0$.
Let $\Psi_{\mu}$ denote the family of all nonnegative measures associated with $\mu\in\mathcal{M}_0(\Theta)$ defined on $\mathcal{M}(\Theta\times\Theta)$, such that for any Borel set on $A\in\Theta$, $\Psi(\Theta, A)-\Psi(A,\Theta)=\mu(A)$.
The Kantorovich-Rubinstein (KR) norm introduced in \cite{kantorovich1957functional} is defined as follows:
\begin{equation}
    ||\mu||^0_{\text{KR}}=\inf \Big\{ \int_{\Xi\times\Xi} d(x,y)d\psi(x,y): \psi\in\Psi_{\mu} \Big\}.
    \label{eq:KR norm}
\end{equation}
Note that the norm in (\ref{eq:KR norm}) is closely related to $W_1(\mu,\nu)$. Indeed, the quantity $\Psi(A_1,A_2)$ has the interpretation of mass transportation from set $A_1$ to set $A_2$. Hence, a measure $\psi\in\Psi_\mu$ stands for a mass transfer from initial distribution $\mu_-$ to target distribution $\mu_+$.
In \cite{hanin1992kantorovich}, the KR norm is extended to every measure $\mu\in\mathcal{M}(\Theta)$ as follows:
\begin{equation}
    ||\mu||_{\text{KR}}=\inf\{||\nu||^0_{\text{KR}}+\text{Var}(\mu-\nu): \nu\in\mathcal{M}_0(\Theta) \}.
    \label{eq:extended KR norm}
\end{equation}
Note that $||\cdot||_{\text{KR}}$ coincides with $||\cdot||^0_{\text{KR}}$ when the measure of interest is in $\mathcal{M}_0(\Theta)$.
Let $\Tilde{W}_1(\mu,\nu)=||\mu-\nu||_{\text{KR}}$ denote the distance for $\mu,\nu\in\mathcal{M}(\Theta)$ induced by the extended KR norm.

It is discussed in \cite{hanin1992kantorovich} that the space of all finite signed Borel measures on $\Theta$ with the extended KR norm is isometrically isomorphic to the space $lip(\Theta,d)^*$ \cite{johnson1974lipschitz}. 
This result is established by considering the dual optimal transport problem \cite{villani2009optimal} associated with the Wasserstein distance of order $1$, which admits the form of finding a Lipschitz cost function that maximizes the difference between the total cost induced by the initial measure and the total cost induced by the target measure.
One of the consequences of this isomorphism is that the space of all finite signed Borel measures on $\Theta$ with the extended KR norm is a Banach space when the underlying space $(\Theta, d)$ is complete. 
We will leverage this property when we discuss the main result of this section.
More discussions on the Wasserstein spaces can be found in \cite{villani2009optimal,ambrosio2008gradient}.

Let $G:\mathcal{M}(\Theta)\times X\rightarrow \mathbb{R}$ be defined as $G(\Tilde{\mu},x)=-\int_{\Theta}F(x,\theta)d\Tilde{\mu}(\theta)+\Tilde{U}^*_{\Tilde{\mu}}$.

\begin{lemma}
\label{lemma:metric}
Let  $\mathcal{N}_{\Tilde{\mu}_*}\times \mathcal{N}_{x_*}$ denote a neighborhood of $(\Tilde{\mu}_*,x_*)\in \mathcal{M}(\Theta)\times X$.
Suppose that the following conditions hold:
\\
(M1) $X$ is compact and convex;
\\
(M2) $f(\cdot,\xi)$ is Lipschitz on $\mathcal{N}_{x_*}$ for all $\xi\in\Xi$;
\\
(M3) $f(x,\cdot)$ is bounded for all $x\in X$; % so that F is bounded and int F dmu is lipschitz in mu
\\
(M4) $f(\cdot,\xi)$ is convex for all $\xi \in \Xi$;
\\
(M5) $\rho_\theta$ satisfy conditions (A2) and (A2) for all $\theta\in\Theta$.
\\
Then, the set-valued map $\Tilde{X}^\epsilon$ is restrictive metrically regular around $(\Tilde{\mu}_*,x_*)$, \ie, for any point $(\Tilde{\mu}_1,x_1)\in (\mathcal{N}_{\Tilde{\mu}_*}\times \mathcal{N}_{x_*}) \cap \text{gph}\Tilde{X}^\epsilon$ and $x_2\in \mathcal{N}_{x_*}$ there exists a constant $\mathtt{M}>0$ and $\Tilde{\mu}_2\in (\Tilde{X}^\epsilon)^{-1}(x_2) $ such that the following inequality holds:
\begin{equation}
    \Tilde{W}_1(\Tilde{\mu}_1,\Tilde{\mu}_2)\leq \mathtt{M} \cdot d(x_1,x_2).
    \label{eq:metric regularity bound}
\end{equation}
\end{lemma}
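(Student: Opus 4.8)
The plan is to read the conclusion (\ref{eq:metric regularity bound}) as a one-sided error bound for the suboptimality gap of the lower level problem, and then verify that bound by an explicit rescaling of the measure, which sidesteps the derivative-surjectivity requirement that the text has already flagged to fail here. Write $g_{\Tilde{\mu}}(x):=\Tilde{U}_{\Tilde{\mu}}(x)-\Tilde{U}^*_{\Tilde{\mu}}\ge 0$ for the gap; then $x\in\Tilde{X}^\epsilon(\Tilde{\mu})$ is the same as $g_{\Tilde{\mu}}(x)\le\epsilon$ (equivalently $G(\Tilde{\mu},x)\ge-\epsilon$), and $\Tilde{\mu}_2\in(\Tilde{X}^\epsilon)^{-1}(x_2)$ means $g_{\Tilde{\mu}_2}(x_2)\le\epsilon$. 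Since $(\Tilde{\mu}_1,x_1)\in\text{gph}\,\Tilde{X}^\epsilon$ gives $g_{\Tilde{\mu}_1}(x_1)\le\epsilon$, it suffices to exhibit one measure $\Tilde{\mu}_2$ with $g_{\Tilde{\mu}_2}(x_2)\le\epsilon$ and $\Tilde{W}_1(\Tilde{\mu}_1,\Tilde{\mu}_2)\le \mathtt{M}\,d(x_1,x_2)$, which is exactly the claimed restrictive metric regularity estimate.

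First I would establish a uniform Lipschitz estimate in the decision variable. By the dual representation (\ref{eq:dual representation of individual risk}), $F(\cdot,\theta)=\rho_\theta[f(\cdot,\xi)]$ is a supremum over densities $\nu\in\mathfrak{m}_\theta$ (which satisfy $\nu\ge 0$ and $\int_\Xi \nu\,dP=1$ for coherent $\rho_\theta$) of the maps $x\mapsto\int_\Xi f(x,\xi)\nu\,dP$. By (M2) each such map is Lipschitz on $\mathcal{N}_{x_*}$ with a modulus that (M3) renders uniformly bounded in $\xi$ by some $\text{Lip}_f$; since $\int_\Xi\nu\,dP=1$, the supremum inherits $\abs{F(x_1,\theta)-F(x_2,\theta)}\le \text{Lip}_f\,d(x_1,x_2)$ for every $\theta$. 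Integrating against $\Tilde{\mu}$ gives $\abs{\Tilde{U}_{\Tilde{\mu}}(x_1)-\Tilde{U}_{\Tilde{\mu}}(x_2)}\le L\,d(x_1,x_2)$, where $L$ (equal to $\text{Lip}_f\,\text{Var}(\Tilde{\mu})$) is finite by (M3) and taken uniform over $\mathcal{N}_{\Tilde{\mu}_*}$. In particular $g_{\Tilde{\mu}_1}(x_2)\le g_{\Tilde{\mu}_1}(x_1)+L\,d(x_1,x_2)\le \epsilon+L\,d(x_1,x_2)$, where (M1) guarantees that the infimum $\Tilde{U}^*_{\Tilde{\mu}_1}$ is attained so that $g_{\Tilde{\mu}_1}$ is well defined even for signed $\Tilde{\mu}_1$.

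The construction is then to set $\Tilde{\mu}_2:=c\,\Tilde{\mu}_1$ for a scalar $c\in(0,1]$. Scaling a measure by $c>0$ commutes with the infimum, so $\Tilde{U}_{c\Tilde{\mu}_1}(x)=c\,\Tilde{U}_{\Tilde{\mu}_1}(x)$ and $\Tilde{U}^*_{c\Tilde{\mu}_1}=c\,\Tilde{U}^*_{\Tilde{\mu}_1}$, whence $g_{c\Tilde{\mu}_1}(x)=c\,g_{\Tilde{\mu}_1}(x)$: contracting $c$ shrinks every suboptimality gap uniformly. Choosing $c=\min\{1,\ \epsilon/g_{\Tilde{\mu}_1}(x_2)\}$ therefore forces $g_{c\Tilde{\mu}_1}(x_2)\le\epsilon$, i.e.\ $x_2\in\Tilde{X}^\epsilon(c\Tilde{\mu}_1)$, so $\Tilde{\mu}_2=c\Tilde{\mu}_1\in(\Tilde{X}^\epsilon)^{-1}(x_2)$. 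If $c=1$ the measures coincide and (\ref{eq:metric regularity bound}) is trivial; otherwise $1-c=\big(g_{\Tilde{\mu}_1}(x_2)-\epsilon\big)/g_{\Tilde{\mu}_1}(x_2)\le L\,d(x_1,x_2)/\epsilon$ by the gap estimate. Because $\Tilde{W}_1$ is induced by the extended Kantorovich--Rubinstein norm of (\ref{eq:extended KR norm}), which is genuinely homogeneous on the Banach space $(\mathcal{M}(\Theta),\|\cdot\|_{\text{KR}})$, we obtain $\Tilde{W}_1(\Tilde{\mu}_1,c\Tilde{\mu}_1)=\|(1-c)\Tilde{\mu}_1\|_{\text{KR}}=(1-c)\|\Tilde{\mu}_1\|_{\text{KR}}\le \tfrac{L}{\epsilon}\|\Tilde{\mu}_1\|_{\text{KR}}\,d(x_1,x_2)$, which is (\ref{eq:metric regularity bound}) with $\mathtt{M}=\tfrac{L}{\epsilon}\sup_{\Tilde{\mu}\in\mathcal{N}_{\Tilde{\mu}_*}}\|\Tilde{\mu}\|_{\text{KR}}$.

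I expect the main obstacle to be conceptual rather than computational: the value function $\Tilde{U}^*_{\Tilde{\mu}}$ is only a concave, nonsmooth function of $\Tilde{\mu}$ (an infimum of linear functionals), so the standard route of linearizing $G$ in $\Tilde{\mu}$ and inverting a surjective derivative is unavailable, which is precisely the failure of surjectivity noted before Definition \ref{def:metric regularity}. The resolution above is to move along the ray $\{c\Tilde{\mu}_1\}$, on which this nonsmooth value function is \emph{exactly} linear by positive homogeneity, so no linearization is needed and the restrictive (rather than full) nature of the regularity is transparent. The remaining delicate points are the uniformity of the Lipschitz modulus in $\xi$ and $\theta$ and the finiteness of $L$ on the chosen neighborhood (handled by (M2)--(M3)), the well-posedness of the lower level for signed measures (handled by compactness (M1) together with the convexity (M4)--(M5) that keeps each $\Tilde{X}^\epsilon(\Tilde{\mu})$ a nonempty convex set), and the genuine dependence on $\epsilon>0$: since $\mathtt{M}\sim 1/\epsilon$, the estimate degrades as the robustness margin vanishes, confirming that metric regularity is a property of the relaxed $\epsilon$-optimal map rather than of the exact solution mapping.
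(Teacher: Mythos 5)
Your proof is correct in its core mechanism but takes a genuinely different route from the paper's. The paper recasts $\Tilde{X}^\epsilon$ as the parametric variational system (\ref{eq:proof:variational sys reformulation}), argues that $G$ is Lipschitz and strictly differentiable in $\Tilde{\mu}$, notes exactly the surjectivity failure you anticipate, and then invokes Theorem 2.10 of \cite{mordukhovich2004restrictive} to obtain restrictive metric regularity as a black box. You instead construct the witness explicitly: positive homogeneity of $\Tilde{\mu}\mapsto\Tilde{U}_{\Tilde{\mu}}$ makes the gap $g_{\Tilde{\mu}}=\Tilde{U}_{\Tilde{\mu}}-\Tilde{U}^*_{\Tilde{\mu}}$ scale linearly along the ray $\{c\Tilde{\mu}_1\}$, the Lipschitz estimate $g_{\Tilde{\mu}_1}(x_2)\le\epsilon+L\,d(x_1,x_2)$ bounds $1-c$ by $L\,d(x_1,x_2)/\epsilon$, and homogeneity of the KR norm converts this into (\ref{eq:metric regularity bound}). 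This is self-contained, bypasses the restrictive-metric-regularity machinery entirely, and yields an explicit modulus $\mathtt{M}\sim L\,\Vert\Tilde{\mu}_1\Vert_{\text{KR}}/\epsilon$ that honestly records the degeneration as $\epsilon\to 0$, which the paper leaves implicit. The one thing your construction gives up is that the witness $c\Tilde{\mu}_1$ is not a probability measure when $c<1$, so your argument does not by itself yield Corollary \ref{coro:metric} on $\mathcal{Q}$ by "restriction" --- though the paper's own passage from the lemma to that corollary is equally non-automatic.

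One point to tighten: your constant $L$ is proportional to $\text{Var}(\Tilde{\mu}_1)$, and you take its supremum over $\mathcal{N}_{\Tilde{\mu}_*}$. A neighborhood in the extended KR norm does not have uniformly bounded total variation when $\Theta$ is an infinite metric space (a signed measure placing mass $n$ at $a$ and $-n$ at $b$ with $d(a,b)=1/n^2$ has KR norm at most $1/n$ but total variation $2n$), so the uniform modulus $\mathtt{M}$ requires either a finite type space, a neighborhood chosen bounded in total variation, or a strengthening of (M2)--(M3) guaranteeing that $\theta\mapsto F(x_1,\theta)-F(x_2,\theta)$ is Lipschitz in $\theta$ with constant $O(d(x_1,x_2))$ so that KR duality applies directly. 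The paper's own assertion that $G$ is Lipschitz on $\mathcal{N}_{\Tilde{\mu}_*}\times\mathcal{N}_{x_*}$ carries the same unstated requirement, so this is a shared gap rather than a defect specific to your argument; with any of the above fixes your proof is complete.
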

\begin{proof}
To obtain the metric regularity property of $\Tilde{X}^\epsilon$, we consider its reformulation as the following variational system:
\begin{equation}
\begin{aligned}
    \widetilde{X}^{\epsilon}(\Tilde{\mu})
    &=\Big\{ x\in X \big\arrowvert \int_{\Theta}F(x,\theta)d\Tilde{\mu}(\theta)\leq \widetilde{U}^*_{\Tilde{\mu}}+\epsilon \Big\} \\
    &=\Big\{ x\in X \big\arrowvert 0\in G(\Tilde{\mu},x)+\epsilon+\mathbb{R}_- \Big\}.
    \label{eq:proof:variational sys reformulation}
\end{aligned}
\end{equation}
According to the discussions on the Wasserstein distances in Section \ref{sec:asymmetry}, we know that $\widetilde{X}^{\epsilon}$ is a set-valued map defined between Banach spaces.
Next, we will follow \cite{artacho2010metric} for the discussion of the metric regularity of the parametric variational system (\ref{eq:proof:variational sys reformulation}).
Since we assume that condition (M2) holds and $\nu^*_\theta(\xi)$ is a density function for all $\theta\in\Theta$, $F(\cdot,\theta)=\int_{\Xi} f(x,\xi)\nu^*_\theta (\xi)dP(\xi)$ is Lipschitz on $\mathcal{N}_{x_*}$ for all $\theta\in\Theta$.
This also implies that $F(\cdot,\theta)$ is lower semicontinuous.
The lower semicontinuity of $F(\cdot,\theta)$ and condition (M1) make the set $\arg\min_{x\in X}U_\mu(x)$ nonempty.
Let $\Tilde{\mu}_1, \Tilde{\mu}_2 \in \mathcal{M}(\Theta)$.
Let $x_1^*\in\arg\min_{x\in X}\Tilde{U}_{\Tilde{\mu}_1}(x)$ and $x_2^*\in\arg\min_{x\in X}\Tilde{U}_{\Tilde{\mu}_2}(x)$.
Then, we have the following estimate of the distance of optimal values:
\begin{equation}
\begin{aligned}
    |\Tilde{U}^*_{\Tilde{\mu}_1}-\Tilde{U}^*_{\Tilde{\mu}_2}|
    & \leq \max \Big\{ \int_{\Theta}F(x_1^*,\theta)(\Tilde{\mu}_2-\Tilde{\mu}_1)(d\theta), \int_{\Theta}F(x_2^*,\theta)(\Tilde{\mu}_1-\Tilde{\mu}_2)(d\theta)\Big\} \\
    & \leq \sup_{h\in H(\Theta)} |\int_{\Theta}h(\theta)(\Tilde{\mu}_1-\Tilde{\mu}_2)(d\theta)|,
    \label{eq:proof:lipschitz condition for optimal value}
\end{aligned}
\end{equation}
where $H(\Theta)$ denotes the set of all locally Lipschitz functions  $h:\Theta\rightarrow \mathbb{R}$.
In the second inequality of (\ref{eq:proof:lipschitz condition for optimal value}), we observe that $\sup_{h\in H(\Theta)} |\int_{\Theta}h(\theta)(\mu_1-\mu_2)(d\theta)|$ is the dual representation of the Wasserstein distance of order 1 \cite{villani2009optimal}.
Hence, we obtain that the optimal value function $\Tilde{U}^*_{\mu}$ is Lipschitz, \ie, $|\Tilde{U}^*_{\mu_1}-\Tilde{U}^*_{\mu_2}| \leq \Tilde{W}_1(\mu_1,\mu_2)$.
Condition (M3) implies that $\int_{\Theta}F(x,\theta)d\Tilde{\mu}(\theta)$ is Lipschitz in $\Tilde{\mu}$.
Thus, we conclude with the discussions above that
$G$ is Lipschitz on the neighborhood $\mathcal{N}_{\Tilde{\mu}_*}\times \mathcal{N}_{x_*}$.
Next, we observe from the linearity property that $\int_{\Theta}F(x,\theta)d\Tilde{\mu}(\theta)$ is strictly partially differentiable with respect to $\Tilde{\mu}$ on $\mathcal{M}(\Theta)$ with derivative $[F(x,\theta)]_{\theta\in\Theta}$.
Together with condition (M4), we obtain that $G$ is strictly differentiable.
On one hand, if the partial derivative $\nabla_{\Tilde{\mu}}G(\Tilde{\mu},x):\mathcal{M}(\Theta)\rightarrow \mathbb{R}$ is surjective, then we conclude the metric regularity of $\Tilde{X}^\epsilon$ using Corollary 3.5 in \cite{artacho2010metric} and obtain exact bounds.
However, this surjectivity condition is clearly violated, since $F(x,\theta)$ is constant given $x\in \mathcal{N}_{x_*}$.
On the other hand, observe from the proof of Lemma \ref{lemma:lower semicontinuous} that under conditions (M4) and (M5), problem (\ref{eq:extended follower problem}) is a convex problem.
Then, by Danskin's theorem, the derivative of $\Tilde{U}^*_{\Tilde{\mu}}$ with respect to $\Tilde{\mu}$ is given by $\frac{\partial}{\partial \Tilde{\mu}}(\Tilde{U}_{\Tilde{\mu}}(\Tilde{x}^*))=[\rho_\theta[f(\Tilde{x}^*,\xi)]_{\theta\in\Theta}$ for all $\Tilde{x}^*\in \Tilde{X}^*(\Tilde{\mu})$, which is constant with respect to $\Tilde{\mu}$.
Then, $\Tilde{U}^*_{\Tilde{\mu}}$ is continuously differentiable and hence strictly differentiable \cite{clarke1990optimization}.
Since the maximum of (\ref{eq:extended follower problem}) is attained, the set $\{[\rho_\theta[f(\Tilde{x}^*,\xi)]_{\theta\in\Theta}|\Tilde{x}^*\in \Tilde{X}^*(\Tilde{\mu}), \Tilde{\mu}\in\mathcal{M}(\Theta)\}$ is closed.
Hence, the set of the strict derivatives of $G$ with respect to $\Tilde{\mu}$ is closed.
Therefore, according to Theorem 2.10 of \cite{mordukhovich2004restrictive}, $\Tilde{X}^\epsilon$ has the restrictive metric regularity property around $(\Tilde{\mu}_*,x_*)$.
Finally, the inequality (\ref{eq:metric regularity bound}) holds due to the equivalent expression presented in \cite{klatte2006nonsmooth}.
\qed
\end{proof}

Note that metric regularity does not always hold in variational systems \cite{mordukhovich2008failure,artacho2010metric}. The reason why the metric regularity holds lies in the fact that the field operator $\mathbb{R}_-$ in (\ref{eq:proof:variational sys reformulation}) is not monotone in the sense of \cite{mordukhovich2008failure}.

Since problem (\ref{eq:follower problem}) can be considered as problem (\ref{eq:extended follower problem}) restricted onto $\mathcal{Q}\in\mathcal{M}(\Theta)$, the next result follows directly from Lemma \ref{lemma:metric}.
\begin{corollary}
\label{coro:metric}
Let the assumptions in Lemma \ref{lemma:metric} hold. Suppose that leader's choice of RPT distribution is $\mu_1\in\mathcal{Q}$ and her anticipated follower's $\epsilon$-approximate optimal solution is $x_1\in X^\epsilon(\mu_1)$.
Then, for a given follower's response $x_2$, there exists a constant $\mathtt{M}>0$ and an RPT distribution $\mu_2$, such that the following holds:
\begin{equation}
    W_1(\mu_1,\mu_2)\leq \mathtt{M} \cdot d(x_1,x_2),
    \label{eq:metric regularity bound RPT}
\end{equation}
provided that $d(x_1,x_2)$ is sufficiently small.
\end{corollary}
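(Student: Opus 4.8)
The plan is to read Corollary \ref{coro:metric} as the specialization of Lemma \ref{lemma:metric} to probability measures, so I would first make precise the sense in which the follower's problem (\ref{eq:follower problem}) is the restriction of the extended problem (\ref{eq:extended follower problem}) onto $\mathcal{Q}\subset\mathcal{M}(\Theta)$. Concretely, $X^\epsilon$ is exactly $\widetilde{X}^\epsilon$ with its argument confined to $\mathcal{Q}$, so that $(X^\epsilon)^{-1}(x_2)=(\widetilde{X}^\epsilon)^{-1}(x_2)\cap\mathcal{Q}$. The crucial reconciliation of the two metrics is that $\widetilde{W}_1$ agrees with $W_1$ on $\mathcal{Q}$: for $\mu_1,\mu_2\in\mathcal{Q}$ the difference $\mu_1-\mu_2$ has zero total mass, hence lies in $\mathcal{M}_0(\Theta)$, on which the extended KR norm (\ref{eq:extended KR norm}) coincides with (\ref{eq:KR norm}) and therefore with the order-$1$ Wasserstein distance. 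Thus any bound produced for $\widetilde{W}_1$ between two probability measures is automatically a bound for $W_1$.

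Next I would instantiate Lemma \ref{lemma:metric} at the base point $(\widetilde{\mu}_*,x_*)=(\mu_1,x_1)$. Since $\mu_1\in\mathcal{Q}$ and $x_1\in X^\epsilon(\mu_1)\subset\widetilde{X}^\epsilon(\mu_1)$, the pair $(\mu_1,x_1)$ lies in $\mathrm{gph}\,\widetilde{X}^\epsilon$, so hypotheses (M1)--(M5) deliver neighborhoods $\mathcal{N}_{\widetilde{\mu}_*}\times\mathcal{N}_{x_*}$ and a constant $\mathtt{M}>0$. The phrase ``provided that $d(x_1,x_2)$ is sufficiently small'' is precisely the requirement that the reported response $x_2$ lie in $\mathcal{N}_{x_*}$. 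Taking $(\widetilde{\mu}_1,x_1)=(\mu_1,x_1)$, the restrictive metric regularity inequality (\ref{eq:metric regularity bound}) then produces a finite signed measure $\widetilde{\mu}_2\in(\widetilde{X}^\epsilon)^{-1}(x_2)$ with $\widetilde{W}_1(\mu_1,\widetilde{\mu}_2)\leq\mathtt{M}\cdot d(x_1,x_2)$. If $\widetilde{\mu}_2$ already belongs to $\mathcal{Q}$, then by the metric identity of the first step I may set $\mu_2=\widetilde{\mu}_2$ and obtain (\ref{eq:metric regularity bound RPT}) at once.

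The main obstacle is that Lemma \ref{lemma:metric} is proved over the Banach space $\mathcal{M}(\Theta)$ and its witness $\widetilde{\mu}_2$ is a priori only a signed measure, whereas the corollary asks for a genuine RPT distribution $\mu_2\in\mathcal{Q}$ under which $x_2$ is still an $\epsilon$-optimal response. I would close this gap using the smallness of $d(x_1,x_2)$: the bound forces $\widetilde{\mu}_2$ to be $\widetilde{W}_1$-close to the probability measure $\mu_1$, so its total mass $\widetilde{\mu}_2(\Theta)$ is near $1$ and its negative variation is small, both of order $d(x_1,x_2)$. I would then project $\widetilde{\mu}_2$ onto $\mathcal{Q}$ by discarding the small negative part and renormalizing the mass to $1$, and control the extra $W_1$-perturbation by these two quantities, absorbing it into a possibly enlarged constant $\mathtt{M}$. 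The delicate part within this step is to confirm that the projected measure keeps $x_2$ in its $\epsilon$-optimal set, i.e. $\mu_2\in(X^\epsilon)^{-1}(x_2)$; this I would obtain from the Lipschitz dependence of both the objective $U_\mu(x)$ and the optimal value $U^*_\mu$ on $\mu$ --- exactly the estimate (\ref{eq:proof:lipschitz condition for optimal value}) derived inside the proof of Lemma \ref{lemma:metric} --- together with the positive homogeneity axiom (A4), which governs how the $\epsilon$-level rescales when the measure is renormalized. Since every correction introduced by the projection is $O(d(x_1,x_2))$, taking $d(x_1,x_2)$ small enough keeps $\mu_2$ feasible and preserves the linear bound (\ref{eq:metric regularity bound RPT}).
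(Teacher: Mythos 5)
Your core move---reading the corollary as Lemma \ref{lemma:metric} restricted to $\mathcal{Q}\subset\mathcal{M}(\Theta)$---is exactly what the paper does; its entire proof is the single sentence that problem (\ref{eq:follower problem}) is problem (\ref{eq:extended follower problem}) restricted onto $\mathcal{Q}$, so the result ``follows directly.'' Where you differ is that you notice, and try to repair, a gap the paper silently steps over: the witness $\widetilde{\mu}_2\in(\widetilde{X}^\epsilon)^{-1}(x_2)$ supplied by the lemma lives in the Banach space of finite signed measures, and nothing in the lemma guarantees it is a probability measure, so the restriction to $\mathcal{Q}$ is not automatic. Your reconciliation of $\widetilde{W}_1$ with $W_1$ on $\mathcal{Q}$ (via $\mu_1-\mu_2\in\mathcal{M}_0(\Theta)$ and the coincidence of the two KR norms there) and your projection-plus-renormalization argument are genuine additions that make the corollary more honest than the paper's one-line justification. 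One residual delicacy in your repair: after projecting $\widetilde{\mu}_2$ onto $\mathcal{Q}$, the Lipschitz estimates give you $U_{\mu_2}(x_2)\leq U^*_{\mu_2}+\epsilon+O(d(x_1,x_2))$, i.e.\ membership in $X^{\epsilon+O(d(x_1,x_2))}(\mu_2)$ rather than $X^{\epsilon}(\mu_2)$; if $x_2$ sits exactly on the boundary of $\widetilde{X}^\epsilon(\widetilde{\mu}_2)$, smallness of $d(x_1,x_2)$ alone does not push it back inside the strict $\epsilon$-level, so you would need either a slightly enlarged tolerance in the conclusion or an argument that the witness can be chosen in the interior. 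Since the paper itself does not address any of this, your proposal is, if anything, a strengthening of the published argument rather than a deviation from it.
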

Consider a leader's choice of RPT distribution $\mu_1$ and an anticipated follower's response $x_1\in X^\epsilon(\mu_1)$.
Since the leader adopts ambiguity tolerance $\epsilon$, she must take into account all of the follower's responses $x_2\in X^{\epsilon}(\mu_1)$. 
Note that there exists multiple RPT distributions $\mu_2\in\mathcal{Q}$ which can lead to a given  $x_2\in X^{\epsilon}(\mu_1)$.
Then, Corollary \ref{coro:metric} states that the distance between $\mu_1$ and $\mu_2$ is upper bounded by the distance between $x_1$ and $x_2$ up to a constant.
Before we present the next result that upper bounds the optimality compromise using the ambiguity tolerance, we first discuss the following interpretation of Corollary \ref{coro:metric}.

As we have discussed in Section \ref{sec:problem:stackelberg} that the leader in STRIPE designs the RPT distribution of the population.
We have assumed that the RPT distribution of the population will become the one chosen by the leader.
However, the leader can only verify whether her choice of RPT distribution is adopted by the population or not through the follower's response.
Then, the assertions in Corollary \ref{coro:metric} can be equivalently understood as the fact that the population's RPT distribution after the risk preference design is not too far away from the leader's choice when the follower's response is close to the leader's anticipation.

\begin{theorem}
\label{thm:optimality compromise}
Let the assumptions in Lemma \ref{lemma:metric} and the second-order growth condition (\ref{eq:second-order growth condition}) hold.
Suppose that the leader's ambiguity tolerance towards the follower's approximate solutions is $\epsilon>0$. 
Then, the leader's optimality compromise $\delta>0$ induced by her ambiguity tolerance satisfies
\begin{equation}
    \delta\leq \sqrt{\frac{\epsilon}{\iota}}\cdot (\text{Lip}_L + \gamma \mathtt{M}).
    \label{eq:upper estimate of optimality compromise}
\end{equation}
\end{theorem}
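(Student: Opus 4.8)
The plan is to read the optimality compromise $\delta$ as the worst-case gap between the leader's \emph{anticipated} objective value, computed at her chosen distribution $\mu_1$ together with the exact follower response she expects, and the \emph{realized} objective value, incurred at the approximate response and the RPT distribution actually induced in the population. Both contributions — the action deviation entering $L$ and the distribution deviation entering the Wasserstein penalty $\gamma W_1(\cdot,\mu^0)$ — will be controlled by $\sqrt{\epsilon/\iota}$, the first via the second-order growth condition (\ref{eq:second-order growth condition}) and the Lipschitz property of $L$, the second via the metric regularity of Corollary \ref{coro:metric}.

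First I would extract the elementary consequence of the growth condition. For any $x\in X^\epsilon(\mu)$ one has $U^*_\mu+\epsilon\geq U_\mu(x)\geq U^*_\mu+\iota\, D(x,X^*(\mu))^2$, so $D(x,X^*(\mu))\leq\sqrt{\epsilon/\iota}$; equivalently $\mathbb{D}(X^\epsilon(\mu),X^*(\mu))\leq\sqrt{\epsilon/\iota}$. Fixing the leader's choice $\mu_1$, I would take an arbitrary realized response $x_2\in X^\epsilon(\mu_1)$ and let $x_1\in X^*(\mu_1)$ be its projection onto the exact solution set — the response the leader ideally anticipates — so that $d(x_1,x_2)=D(x_2,X^*(\mu_1))\leq\sqrt{\epsilon/\iota}$. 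This projection step is what delivers the single factor $\sqrt{\epsilon/\iota}$ rather than a larger multiple, and it is needed precisely because $X^*(\mu_1)$ need not be a singleton. Since $d(x_1,x_2)$ is then small for small $\epsilon$, Corollary \ref{coro:metric} applies and furnishes an RPT distribution $\mu_2$ consistent with $x_2$ (i.e. $x_2\in X^\epsilon(\mu_2)$) satisfying $W_1(\mu_1,\mu_2)\leq\mathtt{M}\, d(x_1,x_2)\leq\mathtt{M}\sqrt{\epsilon/\iota}$.

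With these two estimates in hand, the remaining work is a clean decomposition. Writing $J(\mu,x)=L(x)+\gamma W_1(\mu,\mu^0)$, the discrepancy between realized and anticipated cost splits as
\[
J(\mu_2,x_2)-J(\mu_1,x_1)=\bigl[L(x_2)-L(x_1)\bigr]+\gamma\bigl[W_1(\mu_2,\mu^0)-W_1(\mu_1,\mu^0)\bigr].
\]
The first bracket is bounded by the Lipschitz continuity of $L$, giving $|L(x_2)-L(x_1)|\leq\text{Lip}_L\, d(x_1,x_2)\leq\text{Lip}_L\sqrt{\epsilon/\iota}$. The second bracket is bounded by the triangle inequality for $W_1$ together with Corollary \ref{coro:metric}, giving $|W_1(\mu_2,\mu^0)-W_1(\mu_1,\mu^0)|\leq W_1(\mu_1,\mu_2)\leq\mathtt{M}\sqrt{\epsilon/\iota}$. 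Taking the supremum over $x_2\in X^\epsilon(\mu_1)$ and summing yields $\delta\leq\sqrt{\epsilon/\iota}\,(\text{Lip}_L+\gamma\mathtt{M})$, which is exactly (\ref{eq:upper estimate of optimality compromise}).

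The hard part is conceptual rather than computational. The Wasserstein penalty $\gamma W_1(\mu,\mu^0)$ is independent of the follower's action $x$, so at first glance it appears insensitive to the ambiguity tolerance $\epsilon$; the whole point is that $\epsilon$ enters it only through the shift from the chosen distribution $\mu_1$ to the realized distribution $\mu_2$ forced by the suboptimal response. Making this channel precise is exactly what the restrictive metric regularity established in Lemma \ref{lemma:metric} supplies, and it is the reason the estimate carries the $\gamma\mathtt{M}$ term alongside the expected $\text{Lip}_L$ term. I expect the only subtlety to watch is the locality of Corollary \ref{coro:metric} (it holds for $d(x_1,x_2)$ sufficiently small), which restricts the clean bound to small enough $\epsilon$.
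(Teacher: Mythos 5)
Your proposal is correct and follows essentially the same route as the paper's proof: both use the second-order growth condition to bound the deviation $\mathbb{D}(X^\epsilon(\mu),X^*(\mu))$ by $\sqrt{\epsilon/\iota}$ via projection onto the exact solution set, invoke Corollary \ref{coro:metric} to transfer that action gap into a Wasserstein bound $W_1(\mu_1,\mu_2)\leq\mathtt{M}\,d(x_1,x_2)$, and then split $J$ into the Lipschitz term for $L$ plus a triangle-inequality term for the Wasserstein penalty. Your closing remark on the locality of Corollary \ref{coro:metric} (the bound requiring $d(x_1,x_2)$ sufficiently small, hence $\epsilon$ small) is a caveat the paper's proof passes over silently, and is worth keeping.
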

\begin{proof}
Suppose that $\mu^*\in\mathcal{Q}$ denotes the optimal choice of RPT distribution of the leader, and any $x^*\in X^*(\mu^*)$ is an acceptable optimal solution from the follower given $\mu^*$.
To derive the optimality compromise of the leader, we consider a pair $(\hat{\mu},\hat{x})$ where $\hat{\mu}$ is an RPT distribution satisfying $X^\epsilon(\mu^*) \cap X^\epsilon(\hat{\mu})\neq \emptyset$ and $\hat{x}\in X^\epsilon(\mu^*) \cap X^\epsilon(\hat{\mu})$ represents a follower's response which meets the ambiguity tolerance $\epsilon$ from the perspective of the leader.
Then, the optimality compromise $\delta$ induced by the ambiguity tolerance $\epsilon$ can be measured by the minimum difference between the leader's costs under the worst possible pair $(\hat{\mu}, \hat{x})$ and under an optimal pair $(\mu^*,x^*)$, \ie, $\delta= \sup_{(\hat{\mu},\hat{x})}\inf_{(\mu^*,x^*)} J(\hat{\mu},\hat{x})-J(\mu^*,x^*)$ where the supremum is taken over all $(\hat{\mu},\hat{x})$ pairs satisfying $X^\epsilon(\mu^*) \cap X^\epsilon(\hat{\mu})\neq \emptyset$ and $\hat{x}\in X^\epsilon(\mu^*) \cap X^\epsilon(\hat{\mu})$ and the infimum is taken over all $(\mu^*,x^*)$ pairs such that $x^*\in X^*(\mu^*)$.
Since any point from $X^*(\mu^*)$ is optimal given $\mu^*$, and any point from $X^\epsilon(\mu^*)$ meets the ambiguity tolerance, we know from the definition of the deviation of sets (\ref{eq:deviation of sets}) that $\mathbb{D}(X^\epsilon(\mu^*),X^*(\mu^*))$ is a proper upper-estimate of the distance between the points $x$ and $\hat{x}$ of interest, \ie, $\sup_{\hat{x}\in X^\epsilon(\mu^*)}\inf_{x^*\in X^*(\mu^*)}||x^*-\hat{x}||= \mathbb{D}(X^\epsilon(\mu^*),X^*(\mu^*))$.
Recall that the second-order growth condition is
\begin{equation*}
    U_{\mu^*}(x)\geq \min_{x'\in X}U_{\mu^*}(x')+\iota\cdot D(x,X^*(\mu^*))^2, \ \ \forall x\in X, 
\end{equation*}
which implies that 
\begin{equation*}
    D(x,X^*(\mu^*))\leq \sqrt{\frac{1}{\iota}(U_{\mu^*}(x)-U_{\mu^*}(x^*))}, \ \ \forall x\in X.
\end{equation*}
In particular, for $x\in X^\epsilon(\mu^*)=\{x|U_{\mu^*}(x)-U_{\mu^*}(x^*)\leq \epsilon\}$, we obtain that $D(x,X^*(\mu^*))\leq \sqrt{\frac{\epsilon}{\iota}}, \ \ \forall x\in X^\epsilon(\mu^*)$, which is equivalent to $\mathbb{D}(X^\epsilon(\mu^*),X^*(\mu^*))\leq \sqrt{\frac{\epsilon}{\iota}}$.
From Corollary \ref{coro:metric}, we observe that $W_1(\mu^*,\hat{\mu})\leq \mathtt{M}\cdot ||x^*-\hat{x}||$.
Therefore, we obtain
\begin{equation*}
\begin{aligned}
    \delta
    &= \sup_{(\hat{\mu},\hat{x})}\inf_{(\mu^*,x^*)} J(\hat{\mu},\hat{x})-J(\mu^*,x^*) \\
    & =\sup_{(\hat{\mu},\hat{x})} \inf_{(\mu^*,x^*)} L(\hat{x})-L(x^*)+\gamma W_1(\hat{\mu},\mu^*) \\
    & \leq \sup_{\hat{x}\in X^\epsilon(\mu^*)}\inf_{x^*\in X^*(\mu^*)} \text{Lip}_L ||\hat{x}-x^*||+\gamma\mathtt{M}||\hat{x}-x^*|| \\
    & \leq \mathbb{D}(X^\epsilon(\mu^*),X^*(\mu^*))(\text{Lip}_L+\gamma\mathtt{M}) \\
    & \leq \sqrt{\frac{\epsilon}{\iota}}\cdot(\text{Lip}_L+\gamma\mathtt{M}).
    \label{eq:proof:difference of leader loss}
\end{aligned}
\end{equation*}
This completes the proof.
\qed
\end{proof}

The upper bound of the leader's optimality compromise given by (\ref{eq:upper estimate of optimality compromise}) is an estimate of the leader's loss induced by her ambiguity tolerance $\epsilon$.
This estimate provides a way for selecting the parameters of the robust approximate Stackelberg equilibrium of Definition \ref{def:extended stackelberg solution}.
Note that (\ref{eq:upper estimate of optimality compromise}) only measures the change in the leader's loss resulting from the leader's tolerance of the $\epsilon$-optimal responses from the follower. 
This fact suggests that, apart from the optimality tolerance obtained from (\ref{eq:upper estimate of optimality compromise}), the leader may also consider relaxed solutions to the upper level problem because of the inexactness originating from other sources, such as the inaccuracy of the original RPT distribution $\mu^0$ and the modeling error of using a cost function $L(\cdot)$.

\section{Single-level Reformulation of STRIPE}
\label{sec:reformulation}
The STRIPE problem (\ref{eq:Stackelberg problem}) is in general challenging to solve both analytically and computationally due to its bilevel structure.
In this section, we study the reformulations of (\ref{eq:Stackelberg problem}) to obtain its data-driven solutions.

Recall that the follower's cost function given $\mu\in\mathcal{Q}$ is 
\begin{equation*}
    U_\mu(x)=\mathbb{E}_{\theta\sim \mu}[\rho_\theta[f(x,\xi)]].
    \label{eq:equivalent risk measure}
\end{equation*}
Define $\chi_\mu:\mathcal{Z}\rightarrow \mathbb{R}$ by
\begin{equation}
    \chi_\mu[f(x,\xi)]=U_\mu(x)=\mathbb{E}_{\theta\sim \mu}[\rho_\theta[f(x,\xi)]].
    \label{eq:equivalent risk measure}
\end{equation}
The function $\chi_\mu[\cdot]$ is a convex combination of coherent risk measures $\rho_\theta$ for $\theta\in \Theta$.
Hence, $\chi_\mu[\cdot]$ is itself a coherent risk measure \cite{acerbi2002spectral}.
This fact indicates that the expression of the follower's cost function (\ref{eq:dual representation of follower cost}) can be simplified if the exact form of the equivalent risk measure $\chi_{\mu}$ can be derived.
In the following, we will leverage the law-invariance property of risk measures to obtain a reformulation of (\ref{eq:follower problem}), which allows a convenient way of treating the convex combination of risk measures.

\subsection{The Kusuoka Representation}
\label{sec:reformulation:kusuoka}

\paragraph{Law-invariance} 
Consider $Z_1, Z_2\in \mathcal{Z}$.
We say that $Z_1$ and $Z_2$ have the same distribution with respect to the reference probability measure $P$ if $P(Z_1<z)=P(Z_2<z)$ for all $z\in \mathbb{R}$. 
\begin{definition}
\label{def:law-invariance}
(Law-invariant risk measure.)
A risk measure $\rho$ is law-invariant with respect to the reference probability measure $P$ if for all $Z_1, Z_2\in\mathcal{Z}$, $Z_1$ and $Z_2$ having the same distribution with respect to $P$ indicates that $\rho(Z_1)=\rho(Z_2)$.
\end{definition}
One of the most essential consequences of the law-invariance property of risk measures is the Kusuoka representation \cite{kusuoka2001law}.
Namely, we have the following equivalence for a law-invariant coherent risk measure $\rho$:
\begin{equation}
    \rho(Z)=\sup_{\sigma\in\Sigma}\int_0^1 \text{AV@R}_\alpha(Z) d\sigma(\alpha),
    \label{eq:kusuoka representation}
\end{equation}
where $\Sigma$ can be assumed to be a closed convex set of probability measures on $[0,1)$.
To utilize (\ref{eq:kusuoka representation}), hereinafter we assume that for all $\theta\in\Theta$, the risk measure $\rho_\theta$ is law-invariant and coherent.
Since we have assumed that $v^*_\theta\in \mathfrak{m}_\theta$ attains the maximum of (\ref{eq:dual representation of individual risk}), from Theorem 2 of \cite{shapiro2013kusuoka} there is a uniquely defined $\sigma_\theta$ that attains the maximum of the Kusuoka representation of $\rho_\theta$, \ie,
\begin{equation}
    \rho_\theta(Z)=\int_0^1 \text{AV@R}_\alpha(Z)d\sigma_\theta(\alpha).
    \label{eq:kusuoka maximum attained}
\end{equation}

\paragraph{Equivalent risk spectrum} 
The representation (\ref{eq:kusuoka maximum attained}) is closely related to the spectral risk measures introduced in \cite{acerbi2002spectral}.
Recall that average value-at-risk and value-at-risk has the following relation:
\begin{equation*}
    \text{AV@R}_a(Z):=(1-a)^{-1}\int_{a}^{1}\text{V@R}_t(Z)dt, a\in[0,1).
\end{equation*}
By defining $\eta_\theta(\tau)=\int_0^\tau (1-\alpha)^{-1}d\sigma_\theta(\alpha)$, $\tau\in[0,1]$, we obtain the following spectral representation of (\ref{eq:kusuoka maximum attained}):
\begin{equation}
    \rho_\theta(Z)=\int_0^1 \eta_\theta(\tau) \text{V@R}_\tau(Z)d\tau.
    \label{eq:spectral representation}
\end{equation}
The function $\eta_\theta(\cdot)$ is referred to as the risk spectrum, which is monotonically nondecreasing, right continuous, and satisfies $\int_0^1\eta_\theta(\tau)d\tau=1$ \cite{acerbi2002spectral,shapiro2013kusuoka}.

Combining (\ref{eq:spectral representation}) with (\ref{eq:follower problem}), we can rewrite the follower's cost function into
\begin{equation*}
    U_\mu(x) = \int_\Theta \int_0^1 \eta_\theta(\tau)\text{V@R}_\tau[f(x,\xi)]d\tau d\mu(\theta).
\end{equation*}
Using Fubini's theorem, we can reformulate the follower's cost as
\begin{equation}
    U_\mu(x) = \int_0^1 \eta^L_\mu(\tau) \text{V@R}_\tau[f(x,\xi)] d\tau ,
    \label{eq:equivalent spectra of the leader}
\end{equation}
where $\eta^L_\mu(\tau):=\int_\Theta \eta_\theta(\tau)d\mu(\theta)$ denotes the equivalent risk spectrum associated with leader's choice $\mu$.
Let $\sigma^L_\mu(\cdot)$ denote the probability measure attaining the maximum of the Kusuoka representation of $U_\mu(x)$. 
Then, (\ref{eq:equivalent spectra of the leader}) can be equivalently expressed as
\begin{equation}
    U_\mu(x)=\int_0^1 \text{AV@R}_\tau[f(x,\xi)] d \sigma^L_\mu(\tau).
    \label{eq:kusuoka representation of U}
\end{equation}
With a slight abuse of notation, we let $U(\sigma^L_\mu,x)= U_\mu(x)$ to emphasize the dependence of $U_\mu$ on the probability measure $\sigma^L_\mu$.

\subsection{Reformulation of the Follower's Problem}
\label{sec:reformulation:follower}
 
Observing that the Kusuoka representation (\ref{eq:kusuoka maximum attained}) naturally leads to the optimization of AV@R in the follower's problem (\ref{eq:follower problem}), we will leverage the following well-known result from \cite{rockafellar2000optimization}.
The average value-at-risk is equivalent to
\begin{equation}
    \text{AV@R}_\alpha(Z)=\min_{t\in\mathbb{R}}\left\{t+\frac{1}{1-\alpha}\mathbb{E}\left[(Z-t)_+ \right] \right\}.
    \label{eq:cvar optimization problem}
\end{equation}

\begin{proposition}
\label{prop:reformulation of follower problem}
The follower's problem (\ref{eq:follower problem}) can be reformulated into the following optimization problem:
\begin{equation}
\begin{aligned}
    \min_{x\in X} & \  \ U_\mu(x) \\
    &=\min_{x, \{t\in[0,1]\rightarrow \mathbb{R}\}}\int_0^1 (1-\alpha)t+\mathbb{E}\left[(f(x,\xi)-t)_+ \right] d\eta^L_\mu(\alpha)+\eta^L_\mu(0)\cdot \mathbb{E}[f(x,\xi)],
    \label{eq:reformulation of follower problem}
\end{aligned}
\end{equation}
which is a convex optimization problem when $f(\cdot,\xi)$ is convex for all $\xi\in\Xi$.
\end{proposition}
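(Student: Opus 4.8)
The plan is to push the Rockafellar--Uryasev minimization formula (\ref{eq:cvar optimization problem}) through the Kusuoka representation (\ref{eq:kusuoka representation of U}) of the aggregated risk measure $\chi_\mu$ and then to rewrite the integrating measure in terms of the risk spectrum $\eta^L_\mu$. First I would start from $U_\mu(x)=\int_0^1\text{AV@R}_\alpha[f(x,\xi)]\,d\sigma^L_\mu(\alpha)$ and isolate the level $\alpha=0$. Since $\text{AV@R}_0[f(x,\xi)]=\mathbb{E}[f(x,\xi)]$ and, from the defining relation $\eta_\theta(\tau)=\int_0^\tau(1-\alpha)^{-1}\,d\sigma_\theta(\alpha)$ aggregated through $\eta^L_\mu=\int_\Theta\eta_\theta\,d\mu$, one has $\eta^L_\mu(0)=\sigma^L_\mu(\{0\})$, this level contributes exactly the boundary term $\eta^L_\mu(0)\cdot\mathbb{E}[f(x,\xi)]$ appearing in (\ref{eq:reformulation of follower problem}). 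On the interior $(0,1)$ the same relation gives, as Stieltjes measures, $d\sigma^L_\mu(\alpha)=(1-\alpha)\,d\eta^L_\mu(\alpha)$, which is precisely the change of variables that will later cancel the singular factor $(1-\alpha)^{-1}$.

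Next I would substitute (\ref{eq:cvar optimization problem}) for every $\text{AV@R}_\alpha$ with $\alpha\in(0,1)$, introducing an auxiliary scalar $t_\alpha$ at each level, and interchange the pointwise minimization over the selection $\alpha\mapsto t_\alpha$ with the integral against $\sigma^L_\mu$. This interchange is the crux of the argument: it is justified by the interchangeability principle for minimization and integration of normal integrands over a decomposable space of measurable selections, which turns $\int\min_{t_\alpha}(\cdot)\,d\sigma^L_\mu$ into $\min_{t(\cdot)}\int(\cdot)\,d\sigma^L_\mu$ as long as a measurable optimal selection exists. Applying $d\sigma^L_\mu(\alpha)=(1-\alpha)\,d\eta^L_\mu(\alpha)$ then cancels the factor $(1-\alpha)^{-1}$ multiplying $\mathbb{E}[(f(x,\xi)-t_\alpha)_+]$ and turns the linear term $t_\alpha$ into $(1-\alpha)t_\alpha$, so that the integrand becomes exactly that of (\ref{eq:reformulation of follower problem}); minimizing additionally over $x\in X$ then delivers the claimed reformulation of (\ref{eq:follower problem}).

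For the convexity claim I would argue factorwise under the hypothesis that $f(\cdot,\xi)$ is convex for all $\xi$. Then $(x,t_\alpha)\mapsto f(x,\xi)-t_\alpha$ is jointly convex, and composing with the convex nondecreasing function $(\cdot)_+$ keeps $(f(x,\xi)-t_\alpha)_+$ jointly convex; the expectation preserves convexity, the term $(1-\alpha)t_\alpha$ is affine, and integration against the nonnegative measure $d\eta^L_\mu$ together with the nonnegative weight $\eta^L_\mu(0)$ on $\mathbb{E}[f(x,\xi)]$ preserves joint convexity in $(x,\{t_\alpha\})$; with $X$ convex this makes (\ref{eq:reformulation of follower problem}) a convex program. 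The hard part I anticipate is the measure-theoretic justification of the min--integral interchange and the measurability of the optimizing selection $\alpha\mapsto t_\alpha^\ast$: I would invoke the normal-integrand interchange theorem and observe that $t_\alpha^\ast$ may be taken as an $\alpha$-quantile $\text{V@R}_\alpha[f(x,\xi)]$ of the loss, which is monotone and hence measurable in $\alpha$, ensuring the reformulation is an exact identity rather than a one-sided bound.
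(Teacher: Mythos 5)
Your proposal is correct and follows essentially the same route as the paper's proof: substitute the Rockafellar--Uryasev formula for each $\text{AV@R}_\alpha$ in the Kusuoka representation, interchange minimization and integration (the paper cites Theorem 14.60 of Rockafellar and Wets, which is exactly the normal-integrand interchangeability principle you invoke), apply the change of measure $d\sigma^L_\mu(\alpha)=(1-\alpha)\,d\eta^L_\mu(\alpha)$, and account for the atom of $\sigma^L_\mu$ at $0$ as the term $\eta^L_\mu(0)\cdot\mathbb{E}[f(x,\xi)]$. Your joint-convexity argument in $(x,\{t_\alpha\})$ is a slightly cleaner packaging of the paper's separate convexity-in-$t$ and convexity-in-$x$ composition argument, and your remark that the optimal selection $t_\alpha^\ast=\text{V@R}_\alpha[f(x,\xi)]$ is monotone hence measurable is a worthwhile addition the paper leaves implicit.
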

\begin{proof}
Substituting (\ref{eq:cvar optimization problem}) into $U_\mu(x)$, we obtain 
\begin{equation}
\begin{aligned}
    U_\mu(x) & =\int_0^1 \min_{t\in\mathbb{R}}\left\{t+\frac{1}{1-\alpha}\mathbb{E}\left[(Z_x-t)_+ \right] \right\} d\sigma_\mu(\alpha) \\
    &= \min_{t\in\{[0,1]\rightarrow \mathbb{R}\}}\int_0^1 t+\frac{1}{1-\alpha}\mathbb{E}\left[(Z_x-t)_+ \right] d\sigma_\mu(\alpha),
\end{aligned}
\end{equation}
where the second equality follows from Theorem 14.60 of \cite{rockafellar2009variational}.
Since $\eta_\theta(\tau)=\int_0^\tau (1-\alpha)^{-1}d\sigma_\theta(\alpha)$, we obtain $d\sigma_\theta(\alpha)=(1-\alpha)d\eta_\theta(\alpha)$.
Then, with the mass $\eta^L_\mu(0)$ of $\eta^L_\mu$ at $0$ specified, we arrive at the expression (\ref{eq:reformulation of follower problem}).
The convexity of the optimization problem (\ref{eq:reformulation of follower problem}) in the variable $t$ follows directly from the fact that (\ref{eq:cvar optimization problem}) is convex in $t$ \cite{rockafellar2000optimization}.
The convexity in the variable $x$ follows from the fact that $Z_x\rightarrow (1-\alpha)t+(Z_x-t)_+$ is convex and increasing for all $t$ and that $\eta^L_\mu$ is an increasing function.
\qed
\end{proof}

We have reduced the original risk sensitive stochastic programming problem in (\ref{eq:follower problem}) to a convex optimization problem as presented in Proposition \ref{prop:reformulation of follower problem}.
The remaining challenge before making the follower's problem tractable is that the decision variable $t$ in (\ref{eq:reformulation of follower problem}) is infinite dimensional.
To address this issue, we leverage the approximation technique used in \cite{acerbi2002portfolio,guo2021robust,shapiro2013kusuoka} to further simplify the follower's problem.
Recall that the equivalent risk spectrum $\eta^L_\mu$ is a monotonically nondecreasing and right continuous density function.
Consider the $n$-step function 
\begin{equation}
    \eta^{L,n}_\mu=\sum_{i=1}^n a_i\1_{[\tau_i,1]},
    \label{eq:step function}
\end{equation}
where $a_i>0$ and $0\leq \tau_1 < \tau_2 < \cdots <\tau_n<1$.
Clearly, (\ref{eq:step function}) is a step function which approximates a given equivalent risk spectrum when the parameters $a_i, i=1,2,...,n,$ are properly chosen.
The step function (\ref{eq:step function}) allows us to express the first term on the right-hand-side of (\ref{eq:reformulation of follower problem}) as:
\begin{equation*}
    \int_0^1(1-\alpha)t+\mathbb{E}[(Z_x-t)_+] d \eta^L_\mu(\alpha)
    = \sum_{i=1}^n \left\{ (1-\tau_i)t_i+\mathbb{E}[(Z_x-t_i)_+] \right\}
    \cdot (\sum_{j=1}^i a_j),
    \label{eq:refomulation of the follower problem step function}
\end{equation*}
which is linear in $a_i$ and contains finitely many variables $t_i, i=1,2,...,n$.
Accordingly, the follower's problem (\ref{eq:reformulation of follower problem}) using the approximation (\ref{eq:step function}) can be formulated as:
\begin{equation}
\begin{aligned}
     \min_{x\in X, t_1,...,t_n}\ \ 
     &U_\mu^n(x):=  \sum_{i=1}^n \left\{ (1-\tau_i)t_i+\mathbb{E}[(f(x,\xi)-t_i)_+] \right\}
    \cdot (\sum_{j=1}^i a_j) + \eta^L_\mu(0)\mathbb{E}[f(x,\xi)]\\
    \text{s.t.}\ \
    & \sum_{i=1}^n a_i\cdot \1_{[\tau_i,1]}=\sum_{m=1}^M \eta_{\theta_m}(\alpha)\cdot \mu_{\theta_m}.
    \label{eq:refomulation of the follower problem step function}
\end{aligned}
\end{equation}

The step function (\ref{eq:step function}) introduces errors to the follower's problem.
However, as long as the distance between the step function and the original equivalent risk spectrum is not too large, the solution of the follower's problem will not significantly deviate from the true solution.
Since we have the relation $d\sigma^L_\mu(\alpha)=(1-\alpha) d\eta^L_\mu(\alpha)$, the step function (\ref{eq:step function}) leads to the probability measure
\begin{equation}
    \sigma^{L,n}_\mu(\alpha)=\sum_{i=1}^n a_i(1-\tau_i)\1_{[\tau_i,1]}(\alpha).
\end{equation}
Let $\mathfrak{F}$ denote the set of functions defined by $\mathfrak{F}:=\{ \mathcal{G}:\mathcal{G}(\tau)=\text{AV@R}_\tau[f(x,\xi)], \tau\in[0,1], \forall x\in X, \forall \xi\in\Xi \}$.
Then, we can define the pseudo-metric of probability measures $\sigma_1$ and $\sigma_2$ on $[0,1]$ by:
\begin{equation}
    \dd_{\mathfrak{F}}(\sigma_1, \sigma_2):=\sup_{\mathfrak{f}\in\mathfrak{F}}
    \left|\int_0^1\mathfrak{f}d \sigma_1(\tau)-\int_0^1\mathfrak{f}d \sigma_2(\tau)\right|.
    \label{eq:pseudo metric}
\end{equation}
The next result bounds the optimal value and solution set of the follower's problem when the equivalent risk spectrum is approximated using the step function.
\begin{corollary}
\label{coro:bounds on step approximation}
The following two assertions hold.\\
(i) Let $U^*(\sigma)$ denote the optimal value of follower's problem given equivalent risk spectrum $\sigma$, \ie, 
\begin{equation*}
    U^*(\sigma)=\min_{x\in X} U(\sigma,x) =\min_{x\in X} 
    \int_0^1 \text{AV@R}_\tau [f(x,\xi)] d \sigma(\tau).
\end{equation*}
Then, 
\begin{equation}
    |U^*(\sigma^L_\mu)-U^*(\sigma^{L,n}_\mu)|\leq \dd_{\mathfrak{F}}(\sigma^L_\mu, \sigma^{L,n}_\mu).
\end{equation}
\\
(ii) Let $X^*(\sigma)$ denote the optimal set of the follower's problem given equivalent risk spectrum $\sigma$, \ie,
\begin{equation*}
    X^*(\sigma)=\argmin_{x\in X} 
    \int_0^1 \text{AV@R}_\tau [f(x,\xi)] d \sigma(\tau).
\end{equation*}
Then, there exists an open set $\mathcal{O}\supseteq X^*(\sigma^L_\mu)$ such that $X^*(\sigma^{L,n}_\mu)\subseteq \mathcal{O}$ if $d_\mathfrak{F}(\sigma^L_\mu, \sigma^{L,n}_\mu)$ is sufficiently small.
\end{corollary}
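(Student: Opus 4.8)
The plan is to handle the two assertions in sequence, with the optimal-value estimate of part (i) serving as the engine for the set-inclusion in part (ii). For part (i), I would invoke the classical two-sided comparison of parametric optimal values. Let $x_1^* \in X^*(\sigma^L_\mu)$ and $x_2^* \in X^*(\sigma^{L,n}_\mu)$ be optimal solutions under the exact and the step-approximated spectra. Optimality of $x_1^*$ gives $U^*(\sigma^L_\mu) \leq U(\sigma^L_\mu, x_2^*)$, and optimality of $x_2^*$ gives $U^*(\sigma^{L,n}_\mu) \leq U(\sigma^{L,n}_\mu, x_1^*)$, whence
\begin{equation*}
    U^*(\sigma^L_\mu) - U^*(\sigma^{L,n}_\mu) \leq U(\sigma^L_\mu, x_2^*) - U(\sigma^{L,n}_\mu, x_2^*),
\end{equation*}
together with the symmetric inequality. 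The key observation is that for every fixed $x \in X$ the map $\tau \mapsto \text{AV@R}_\tau[f(x,\xi)]$ is, by the very definition of $\mathfrak{F}$, an admissible test function in the pseudo-metric (\ref{eq:pseudo metric}). Consequently $|U(\sigma^L_\mu, x) - U(\sigma^{L,n}_\mu, x)| \leq \dd_{\mathfrak{F}}(\sigma^L_\mu, \sigma^{L,n}_\mu)$ uniformly in $x$, and taking the maximum over the two directions yields the claimed bound.

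For part (ii), the target is the outer semicontinuity of the solution map $\sigma \mapsto X^*(\sigma)$ at $\sigma^L_\mu$. I would fix an arbitrary open neighborhood $\mathcal{O}$ of the compact set $X^*(\sigma^L_\mu)$ and argue by contradiction. If the inclusion failed for all small perturbations, there would be a sequence with $\dd_{\mathfrak{F}}(\sigma^L_\mu, \sigma^{L,n_k}_\mu) \to 0$ and minimizers $x_{n_k} \in X^*(\sigma^{L,n_k}_\mu) \setminus \mathcal{O}$. Compactness of $X$ and closedness of $X \setminus \mathcal{O}$ extract a subsequential limit $\bar{x} \in X \setminus \mathcal{O}$. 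I would then show $\bar{x} \in X^*(\sigma^L_\mu)$, contradicting $\bar{x} \notin \mathcal{O}$. To this end, write
\begin{equation*}
    U(\sigma^L_\mu, x_{n_k}) = U^*(\sigma^{L,n_k}_\mu) + \left[ U(\sigma^L_\mu, x_{n_k}) - U(\sigma^{L,n_k}_\mu, x_{n_k}) \right],
\end{equation*}
where the bracketed term is bounded by $\dd_{\mathfrak{F}}(\sigma^L_\mu, \sigma^{L,n_k}_\mu) \to 0$ via the pointwise estimate from part (i), and $U^*(\sigma^{L,n_k}_\mu) \to U^*(\sigma^L_\mu)$ by the optimal-value estimate from part (i). Hence $U(\sigma^L_\mu, x_{n_k}) \to U^*(\sigma^L_\mu)$, and lower semicontinuity of $U(\sigma^L_\mu, \cdot)$ forces $U(\sigma^L_\mu, \bar{x}) \leq U^*(\sigma^L_\mu)$; since the reverse inequality is automatic, $\bar{x}$ is a minimizer.

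The main obstacle is supplying the lower semicontinuity of $x \mapsto U(\sigma^L_\mu, x)$ used in the final limiting step. This follows from the standing assumption that $f(\cdot, \xi)$ is convex for all $\xi$: each $\text{AV@R}_\tau$ is coherent, hence convex and monotone, so $U(\sigma^L_\mu, \cdot)$ is a finite-valued convex function on the compact convex set $X$, and such functions are continuous on the relative interior and lower semicontinuous throughout. A secondary point requiring care is that the pointwise estimate of part (i) is applied at the moving points $x_{n_k}$; this is legitimate precisely because that estimate holds for every fixed argument with the same bound $\dd_{\mathfrak{F}}(\sigma^L_\mu, \sigma^{L,n_k}_\mu)$, independent of $x$.
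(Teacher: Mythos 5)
Your proof is correct in substance, but it takes a genuinely different route from the paper: the paper's entire proof is a one-line citation to Theorem 5 of the quantitative stability results of R\"omisch, whereas you reconstruct the underlying argument from scratch. Your part (i) is the standard two-sided optimal-value comparison combined with the observation that each $\tau\mapsto \text{AV@R}_\tau[f(x,\xi)]$ is an admissible test function in (\ref{eq:pseudo metric}), giving the uniform-in-$x$ bound $\sup_{x\in X}|U(\sigma^L_\mu,x)-U(\sigma^{L,n}_\mu,x)|\leq \dd_{\mathfrak{F}}(\sigma^L_\mu,\sigma^{L,n}_\mu)$; your part (ii) is the classical Berge-type compactness-and-contradiction argument for outer semicontinuity of the $\argmin$ map. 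This buys a self-contained and elementary verification, and it makes explicit which hypotheses are actually used (compactness of $X$, continuity of the objective, uniformity of the perturbation bound), whereas the paper's citation inherits the hypotheses of the referenced stability theorem wholesale. The one point you should repair is the justification of lower semicontinuity at the end of part (ii): a finite-valued convex function on a compact convex set is continuous on the relative interior and \emph{upper} semicontinuous at relative boundary points, but lower semicontinuity there can fail (e.g.\ $g(0)=1$, $g(x)=0$ for $x\in(0,1]$ is convex on $[0,1]$ and not lsc at $0$), so the general principle you invoke is stated backwards and is exactly the wrong inequality for the limiting step $U(\sigma^L_\mu,\bar x)\leq\liminf_k U(\sigma^L_\mu,x_{n_k})$. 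What actually rescues the step in this paper's setting is that $f(\cdot,\xi)$ is convex and finite on a neighborhood of the compact set $X$ (indeed Lipschitz near points of interest, per condition (M2)), so that $U(\sigma^L_\mu,\cdot)$ is genuinely continuous on $X$; with that substitution your argument goes through.
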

\begin{proof}
The results follow from Theorem 5 of \cite{romisch2003stability}.
\qed
\end{proof}

\subsection{Single-Level Reformulation with Finite Type Space}
\label{sec:reformulation:finite}

In this subsection, we investigate the single-level reformulation of (\ref{eq:Stackelberg problem}) for general finite RPT spaces as illustrated in Example \ref{eg:1}.
The single-level reformulation of (\ref{eq:Stackelberg problem}) for parameterized types spaces presented in Example \ref{eg:2} depends on the base risk measure of interest. 
Hence, we will elaborate one example in Section \ref{sec:reformulation:parameterized}.
Finite RPT spaces lead to $\Theta=\{\theta_1,\theta_2,...,\theta_M\}$ and $\mu=(\mu_1,\mu_2,...,\mu_M)\in \Delta^{M-1}$, where $\Delta^{M-1}$ denotes the $(M-1)$-probability simplex.

Two popular approaches to reformulate bilevel programming problems are often adopted in the literature, the Karush-Kuhn-Tucker (KKT) reformulation and the optimal value reformulation.
In the KKT reformulation, one represents the follower's problem using its KKT conditions and optimizes the leader's objective function subject to this KKT system.
It may seem that the follower's problem (\ref{eq:reformulation of follower problem}) is an unconstrained convex program whose KKT system reduces to the first-order condition of the objective function.
However, the term $\mathbb{E}[(f(x,\xi)-t_i)_+]$ in (\ref{eq:reformulation of follower problem}) is commonly treated by introducing auxiliary variables and adding corresponding constraints.
This substitution makes the follower's problem constrained and results in a KKT system containing the complementarity conditions.
The KKT reformulation based on such a KKT system turns out to be a mathematical program with equilibrium constraints (MPEC), which, according to \cite{ye1997exact}, dose not have the MFCQ. 
Therefore, we resort to the optimal value function which relies on the sensitivity of the optimal value function of the follower's problem.
For notational simplicity, we focus on the optimistic STRIPE in Remark \ref{remark:optimistic counterpart}. The reformulation of the pessimistic STRIPE only differs in the upper level.

\paragraph{Optimal Value Reformulation of STRIPE.}
The optimal value reformulation of STRIPE can be expressed as:
\begin{equation}
    \begin{aligned}
         \min_{\mu\in\Delta^{M-1}, x\in X, t_1,...,t_n} & J(\mu,x) \\
         \text{s.t.} \ \ 
         & U_\mu^n(x)-U_\mu^{n,*}\leq 0, 
         \label{eq:optimal value reformulation 0}
    \end{aligned}
\end{equation}
where $U_\mu^{n,*}$ denotes the optimal value function of (\ref{eq:refomulation of the follower problem step function}) given $\mu$.
The solution to problem (\ref{eq:optimal value reformulation 0}) can be obtained even when the exact value of $U_\mu^{n,*}$ is unavailable.
As long as we can obtain the sensitivity of $U_\mu^{n,*}$ with respect to the change of $\mu$, gradient-based algorithms can be utilized to solve (\ref{eq:optimal value reformulation 0}).
By Danskin's theorem, we know that the sensitivity of $U_\mu^{n,*}$ depends on the sensitivity of $U_\mu^{n}(x)$ at the optimal solutions of (\ref{eq:refomulation of the follower problem step function}).
In the following, we first introduce the sample approximation of (\ref{eq:optimal value reformulation 0}) to further simplify the problem; then, we present the sensitivity of the optimal value function.

\begin{proposition}
\label{prop:sample approximation}
Let $\xi_1,\xi_2,...,\xi_N$ denote the $i.i.d.$ samples of the randomness $\xi$ of size $N$.
The sampled optimal value reformulation of the STRIPE problem under the step approximation (\ref{eq:step function}) is:
\begin{equation}
    \begin{aligned}
         \min_{\substack{\mu\in\Delta^{M-1}, x\in X; \\ t_i, i=1,...,n; \\ s_i^k\geq 0, i=1,...,n, k=1,...,N }} & J(\mu,x) \\
         \text{s.t.} \ \ & U_\mu^{N,n}(x) -  U_\mu^{N,n,*} \leq 0, \\
         & f(x,\xi_j)-t_i-s_i^k \leq 0, \ \ \forall i=1,...,n, \forall k=1,...,N,
         \label{eq:sample single level reformulation}
    \end{aligned}
\end{equation}
where  $ U_\mu^{N,n,*}$ denotes the optimal value of the sampled approximation of (\ref{eq:refomulation of the follower problem step function}): 
\begin{equation}
\begin{aligned}
     \min_{\substack{x\in X, t_i, i=1,...,n;\\ s_i^k>0, i=1,...,n, k=1,...,N}} 
     & U_\mu^{N,n}(x):=  \sum_{i=1}^n c_i \left( (1-\tau_i)t_i+\frac{1}{N}\sum_{k=1}^{N}s_i^k \right)
          +c_1\frac{1}{N}\sum_{k=1}^{N}f(x,\xi_k) \\
     \text{s.t.}\ \ 
     & f(x,\xi_k)-t_i-s_i^k\leq 0 , \ \ \forall i=1,...,n, \forall k=1,...,N.
     \label{eq:sample follower problem}
\end{aligned}
\end{equation}
with $c_i=\sum_{m=1}^M\mu_m\eta_{\theta_m}(\tau_i) $ for all $i=1,2,...,n$.
\end{proposition}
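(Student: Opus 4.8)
The plan is to obtain (\ref{eq:sample single level reformulation})--(\ref{eq:sample follower problem}) from the optimal value reformulation (\ref{eq:optimal value reformulation 0}) by three successive and essentially mechanical manipulations: a coefficient identification coming from the step approximation, a sample average approximation of the two expectations, and an epigraph-style linearization of the positive-part terms. Each transformation preserves the optimal value reformulation structure, so the only genuine content is verifying that the linearization is tight at optimality.

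First I would unwind the coefficients. The step function (\ref{eq:step function}) enters the follower's objective in (\ref{eq:refomulation of the follower problem step function}) through the cumulative weights $\sum_{j=1}^i a_j$. Evaluating the defining constraint $\sum_{i=1}^n a_i \1_{[\tau_i,1]} = \sum_{m=1}^M \eta_{\theta_m}(\cdot)\mu_{\theta_m}$ at each breakpoint $\tau_i$ --- and using that the breakpoints are ordered so that $\1_{[\tau_j,1]}(\tau_i)=1$ exactly when $j\le i$ --- yields $\sum_{j=1}^i a_j = \sum_{m=1}^M \mu_m \eta_{\theta_m}(\tau_i) =: c_i$, and likewise the mass at the origin $\eta^L_\mu(0)$ is identified with $c_1$. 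Substituting these constants turns the follower's objective into a finite sum weighted by the $c_i$, which are nonnegative because each $\eta_{\theta_m}$ is a nonnegative nondecreasing spectrum and $\mu$ is a probability vector.

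Next I would pass to the data-driven form. Drawing the $i.i.d.$ sample $\xi_1,\dots,\xi_N$, I replace $\mathbb{E}[(f(x,\xi)-t_i)_+]$ by the empirical average $\tfrac{1}{N}\sum_{k=1}^N (f(x,\xi_k)-t_i)_+$ and $\mathbb{E}[f(x,\xi)]$ by $\tfrac{1}{N}\sum_{k=1}^N f(x,\xi_k)$, which is the standard SAA substitution. To remove the nonsmooth positive parts I introduce, for every index pair $(i,k)$, an auxiliary variable $s_i^k \ge 0$ together with the linear constraint $f(x,\xi_k)-t_i-s_i^k \le 0$; this is exactly the epigraphical reformulation of $(f(x,\xi_k)-t_i)_+$. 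Because the coefficient $c_i/N$ multiplying each $s_i^k$ in the objective is nonnegative and the problem is a minimization, every $s_i^k$ is driven down to its lower envelope $\max\{f(x,\xi_k)-t_i,0\}$ at any optimal solution, so the linearized follower problem (\ref{eq:sample follower problem}) has the same optimal value and solution set as the SAA of (\ref{eq:refomulation of the follower problem step function}).

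Finally I would assemble the single-level program. Denoting by $U_\mu^{N,n,*}$ the optimal value of the sampled follower problem (\ref{eq:sample follower problem}), the optimal value reformulation principle already used in (\ref{eq:optimal value reformulation 0}) replaces the lower-level argmin by the single constraint $U_\mu^{N,n}(x)-U_\mu^{N,n,*}\le 0$; carrying the linearization constraints $f(x,\xi_k)-t_i-s_i^k\le 0$ up to the upper level then gives precisely (\ref{eq:sample single level reformulation}). I expect the only delicate step to be the tightness of the linearization, that is, arguing that augmenting the feasible set with the slack variables $s_i^k$ does not enlarge the attainable objective values; this rests squarely on the sign condition $c_i\ge 0$ established in the first step. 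The error introduced by the step approximation itself is already controlled by Corollary \ref{coro:bounds on step approximation}, so no separate estimate is needed in this derivation.
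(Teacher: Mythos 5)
Your proposal is correct and follows essentially the same route as the paper's proof: introduce the slack variables $s_i^k$ for the positive parts, apply the SAA substitution, and identify the cumulative step weights $\sum_{j\le i} a_j$ with $c_i=\sum_m \mu_m\eta_{\theta_m}(\tau_i)$ via the defining constraint of the step function. The only difference is that you explicitly verify the tightness of the epigraphical linearization using $c_i\ge 0$, a standard detail the paper leaves implicit.
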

\begin{proof}
With the introduction of auxiliary variables $s_i^k, i=1,...,n, k=1,...,N$, the sampled approximation of (\ref{eq:refomulation of the follower problem step function}) can be formulated as 
\begin{equation*}
    \begin{aligned}
        \min_{\substack{x\in X, t_i, i=1,...,n;\\ s_i^k>0, i=1,...,n, k=1,...,N}} 
        & \sum_{i=1}^n (\sum_{j=1}^i a_j)\left( (1-\tau_i)t_i+ \frac{1}{N}\sum_{k=1}^n s_i^k \right)
     + \eta^L_\mu(0)\frac{1}{N}\sum_{k=1}^N f(x,\xi_k) \\
    \text{s.t.}\ \ 
    & f(x,\xi_k)-t_i-s_i^k\leq 0,  \ \ \forall i=1,...,n, \forall k=1,...,N, \\
    &  \sum_{i=1}^n a_i \1_{[\tau_i,1]}=\sum_{m=1}^M \mu_m\eta_{\theta_m}(\alpha).
    \end{aligned}
\end{equation*}
Observe from (\ref{eq:step function}) that $\sum_{m=1}^M\eta_{\theta_m}(t_j)\mu_m=\sum_{i=1}^j a_i$, $\forall j=1,...,n$, and by introducing the variables $c_i, i=1,...,n$, we arrive at the formulation (\ref{eq:sample follower problem}).
\qed
\end{proof}

\paragraph{Sensitivity of the Optimal Value Function.}
\begin{proposition}
\label{prop:sensitivity}
Let $x^*(\mu)$, $t_i^*(\mu), \forall i=1,...,n$, and $s_i^{k,*}(\mu), \forall i=1,...,n, \forall k=1,...,N$ denote an optimal solution of (\ref{eq:sample follower problem}) given $\mu$.
Then, the sensitivity of the optimal value $U_\mu^{N,n,*}$ with respect to $m=1,2,...,M,$ is given by
\begin{equation}
    \frac{\partial}{\partial \mu_m} U_\mu^{N,n,*}
    =\sum_{1=i}^n \eta_{\theta_m}(\tau_i)\left( (1-\tau_i)t_i^*+\frac{1}{N}\sum_{k=1}^N s_i^{k,*} \right) + \frac{\eta_{\theta_m}(\tau_i)}{N}\sum_{k=1}^N f(x^*,\xi_k).
    \label{eq:sensitivity of optimal value function}
\end{equation}
\end{proposition}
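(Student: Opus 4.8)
The plan is to view $U_\mu^{N,n,*}$ as the optimal value of a parametric minimization problem and invoke Danskin's theorem (the envelope theorem for value functions). The decisive structural observation is that in problem~(\ref{eq:sample follower problem}) the parameter $\mu$ enters \emph{only} through the objective $U_\mu^{N,n}$, and does so solely via the coefficients $c_i=\sum_{m=1}^M\mu_m\eta_{\theta_m}(\tau_i)$; the feasible set, described by $x\in X$, $s_i^k\ge 0$, and $f(x,\xi_k)-t_i-s_i^k\le 0$, is completely independent of $\mu$. Hence I would write $U_\mu^{N,n,*}=\min_{(x,t,s)\in\mathcal{C}}U_\mu^{N,n}(x,t,s)$ over a fixed constraint set $\mathcal{C}$, which is the precise situation in which the envelope theorem applies cleanly.

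First I would record that for each fixed feasible $(x,t,s)$ the map $\mu\mapsto U_\mu^{N,n}(x,t,s)$ is affine, with $\partial c_i/\partial\mu_m=\eta_{\theta_m}(\tau_i)$, so that the partial derivative of the objective is obtained by differentiating each coefficient:
\[
\frac{\partial}{\partial\mu_m}U_\mu^{N,n}(x,t,s)
=\sum_{i=1}^n\eta_{\theta_m}(\tau_i)\Big((1-\tau_i)t_i+\frac{1}{N}\sum_{k=1}^N s_i^k\Big)
+\eta_{\theta_m}(\tau_1)\frac{1}{N}\sum_{k=1}^N f(x,\xi_k).
\]
I would also verify the hypotheses needed for Danskin: by Proposition~\ref{prop:reformulation of follower problem} the objective is convex in $(x,t,s)$, the set $X$ is compact, and the inner minimization over $(t,s)$ is coercive (it reduces to the sample version of the representation~(\ref{eq:cvar optimization problem})), so the minimum is attained and an optimal solution $(x^*(\mu),t^*(\mu),s^*(\mu))$ exists. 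Evaluating the displayed partial derivative at such an optimizer then produces exactly~(\ref{eq:sensitivity of optimal value function}).

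The main obstacle is that Danskin's theorem delivers a genuine derivative only when the minimizing data are unambiguous; with a non-unique optimal solution the value function (a pointwise minimum of affine functions of $\mu$, hence concave) is a priori only directionally differentiable, and its superdifferential is the convex hull of the gradients over all minimizers. I would resolve this by observing that since each $c_i\ge 0$, the inner minimization over $(t,s)$ decouples across $i$, so at optimality $s_i^{k,*}=(f(x^*,\xi_k)-t_i^*)_+$ and the bracketed term $(1-\tau_i)t_i^*+\frac1N\sum_k s_i^{k,*}$ equals $(1-\tau_i)$ times the empirical $\mathrm{AV@R}_{\tau_i}$ of $f(x^*,\xi)$, a quantity fixed by $x^*$ alone; similarly $\frac1N\sum_k f(x^*,\xi_k)$ depends only on $x^*$. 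Consequently the right-hand side of~(\ref{eq:sensitivity of optimal value function}) takes the same value at every optimal solution, so wherever the concave value function is differentiable its gradient is given by the stated formula, and in general the formula furnishes an element of the superdifferential. This yields the claimed sensitivity and justifies its use in the gradient-based solution of the optimal value reformulation~(\ref{eq:sample single level reformulation}).
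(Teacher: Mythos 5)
Your proposal is correct and follows essentially the same route as the paper, which likewise notes that the constraint set of (\ref{eq:sample follower problem}) is independent of $\mu$ and invokes Danskin's theorem; your additional discussion of non-uniqueness and the superdifferential is a careful elaboration the paper omits. Incidentally, your coefficient $\eta_{\theta_m}(\tau_1)$ on the empirical-mean term is the correct derivative of $c_1$, whereas the paper's displayed formula leaves a stray index $\tau_i$ there.
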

\begin{proof}
Since the constraint in (\ref{eq:sample follower problem}) is independent of the parameter $\mu$, the result follows from Danskin's theorem.
\qed
\end{proof}
\begin{comment}
Let $\lambda_i^{k,*}(\mu), \forall i=1,...,n, \forall k=1,...,K$ denote the optimal Lagrange multipliers of (\ref{eq:sample follower problem}).
The sensitivity of $U_\mu^{N,n,*}$ with respect to $\mu$ is given by the chain rule as the following:
\begin{equation}
    \nabla_\mu U_\mu^{N,n,*}=
    \nabla_\mu U_\mu^{N,n}
    +\nabla_x U_\mu^{N,n} \nabla x^*(\mu)
    +\sum_{i=1}^n \nabla_{t_i}U_\mu^{N,n} \nabla t_i^*(\mu)
    +\sum_{i=1}^n\sum_{k=1}^N \nabla_{s_i^k} U_\mu^{N,n} \nabla s_i^{k,*}(\mu).
\end{equation}
The optimal solution $x^*(\mu)$ satisfies the first-order condition of the Lagrangian of (\ref{eq:sample follower problem}), \ie,
\begin{equation}
    \frac{c_1}{N}\sum_{k=1}^N\frac{\partial}{\partial x} f(x^*(\mu),\xi_k) 
    + \sum_{i=1}^n\sum_{k=1}^N \lambda_i^{k,*} \frac{\partial}{\partial x}f(x^*(\mu),\xi_k)=0,
\end{equation}
and the complementarity condition, \ie, 
\begin{equation}
    \sum_{i=1}^n\sum_{k=1}^N \lambda_i^{k,*}(f(x^*(\mu),\xi_k)-t_i^*-s_i^{k,*})=0.
\end{equation}
\end{comment}

The issues of constraint qualifications can occur in problem (\ref{eq:sample single level reformulation}).
A suitable way to resolve it is by relaxing the first constraint in (\ref{eq:sample single level reformulation}) to $U_\mu^{N,n}(x)-U_\mu^{N,n,*}\leq \epsilon$ for $\epsilon>0$ and solving for $\epsilon$-optimal follower's solution.
In \cite{lin2014solving}, the authors have shown that MFCQ holds automatically under this relaxation technique.

Note that the optimal solution $x^*(\mu)$, $t_i^*(\mu), \forall i=1,...,n$, $s_i^{k,*}(\mu)$, $\forall i=1,...,n, \forall k=1,...,N$  required in Proposition \ref{prop:sensitivity} is obtained by solving the follower's problem given the leader's action $\mu$.
%When the solution to (\ref{eq:sample follower problem}) is not a singleton, the sensitivity can be computed using an element from the solution set to (\ref{eq:sample follower problem}).
This fact suggests that the numerical computation of the solution to (\ref{eq:sample single level reformulation}) still involves a two-time scale iteration.
For advances in the computation methods of bilevel programming problems, we refer the readers to \cite{lin2014solving,chen2022single} for single-time scale algorithms.

\paragraph{Approximation Error.}
%\subsection{Approximation Error}
%\label{sec:reformulation:sample}
The sample approximation (\ref{eq:sample single level reformulation}) introduces errors to the solution to STRIPE.
In this subsection, we analyze the performance of the sample approximation (\ref{eq:sample single level reformulation}) centered around Definition \ref{def:extended stackelberg solution}.
We first invoke the following result \cite{shapiro2021lectures} quantifying the statistical property of the follower's problem with finitely many samples.

\begin{lemma}
\label{lemma:monte carlo finite sample}
Let $\xi_1,\xi_2,...,\xi_N$ denote $N$ $i.i.d$ samples of the randomness $\xi$.
Let the following conditions be satisfied:
 \\
\hspace*{1em}(C1) There are functions $\kappa^i_1:\Xi\rightarrow\mathbb{R}_+$ for all $i=1,2,...,n$ and $\kappa_2:\Xi\rightarrow\mathbb{R}_+$, such that their moment generating functions $\mathbb{M}_{\kappa_1^i}(\cdot)$ for all i=1,2,...,n, and $\mathbb{M}_{\kappa_2}(\cdot)$ are finite valued in a neighborhood of zero, and 
\begin{equation}
    \begin{aligned}
         &|(f(x',\xi)-t_i)_+-(f(x,\xi)-t_i)_+|\leq \kappa_1^i(\xi)||x'-x||, \forall i=1,2,...,n, \\
         &|f(x',\xi)-f(x,\xi)|\leq\kappa_2(\xi)||x'-x||,
    \end{aligned}
\end{equation}
for almost everywhere $\xi\in\Xi$ and all $x',x\in X$;\\
\hspace*{1em}(C2) The moment generating function $\mathbb{M}_{x',x}(\cdot)$ of the random variable
\small
\begin{equation*}
\begin{aligned}
    Y_{x',x}=&\left[ \left( \sum_{i=1}^M c_i(f(x',\xi)-t_i)_+ +c_1 f(x',\xi) \right)-\left( \sum_{i=1}^M c_i\mathbb{E}[(f(x',\xi)-t_i)_+] +c_1\mathbb{E}[f(x',\xi)] \right) \right] \\
    - 
    &\left[ \left( \sum_{i=1}^M c_i(f(x,\xi)-t_i)_+ +c_1 f(x,\xi) \right)-\left( \sum_{i=1}^M c_i\mathbb{E}[(f(x,\xi)-t_i)_+] +c_1\mathbb{E}[f(x,\xi)] \right) \right],
\end{aligned}
\end{equation*}
\normalsize
satisfies 
\begin{equation*}
    \mathbb{M}_{x',x}(w)\leq \exp (\lambda^2||x'-x||^2\frac{w^2}{2}).
\end{equation*}
\\
Then, for any choice of $\mu\in \Delta^{M-1}$, a follower's action $x^*_N\in X$ is an $\epsilon_1$-optimal solution to (\ref{eq:refomulation of the follower problem step function}) with probability at least $1-\beta$ for $\beta\in(0,1)$, if it is an $\epsilon_2$-optimal solution to (\ref{eq:sample follower problem}) with $\epsilon_2\in[0,\epsilon_1)$ and sample size
\begin{equation}
    N\geq\frac{O(1)\lambda^2 D^2}{(\epsilon_1-\epsilon_2)^2}\left[n\cdot ln(\frac{O(1)\mathbb{E}[\kappa(\xi)]D}{\epsilon_1-\epsilon_2})+ln(\frac{1}{\beta}) \right],
    \label{eq:sample size bound}
\end{equation}
where $D$ denotes the diameter of set $X$ and $\kappa(\xi)=\sum_{i=1}^n c_i\kappa_1^i(\xi)+c_1 \kappa_2(\xi)$.
\end{lemma}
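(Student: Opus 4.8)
The plan is to establish this as a uniform large-deviation estimate for the sample average approximation, following the classical covering-number argument for SAA finite-sample guarantees. Write $g(x):=U_\mu^n(x)$ for the true stepped objective in (\ref{eq:refomulation of the follower problem step function}) and $\hat{g}_N(x):=U_\mu^{N,n}(x)$ for its sampled counterpart in (\ref{eq:sample follower problem}), both viewed as functions on the compact feasible region, and denote their optimal values by $g^*$ and $\hat{g}_N^*$. The first step is purely deterministic: I would show that whenever the uniform deviation $\Delta_N:=\sup_{x}|\hat{g}_N(x)-g(x)|$ is at most $(\epsilon_1-\epsilon_2)/2$, every $\epsilon_2$-optimal solution of the sampled problem is automatically an $\epsilon_1$-optimal solution of the true problem. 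This is a short sandwiching estimate: for an $\epsilon_2$-minimizer $x_N^*$ of $\hat{g}_N$ one has $g(x_N^*)\le \hat{g}_N(x_N^*)+\Delta_N\le \hat{g}_N^*+\epsilon_2+\Delta_N\le g^*+\epsilon_2+2\Delta_N\le g^*+\epsilon_1$, using $\hat{g}_N^*\le g^*+\Delta_N$. Thus it suffices to prove that $\mathbb{P}(\Delta_N>(\epsilon_1-\epsilon_2)/2)\le\beta$ under the stated sample size.

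To bound this probability I would discretize the feasible set. Since $X$ has diameter $D$, cover it by a finite $\nu$-net whose cardinality is at most $(O(1)D/\nu)^{n}$, the exponent $n$ being the dimension of the search space and the source of the $n\ln(\cdot)$ term in (\ref{eq:sample size bound}). Condition (C1) supplies a common Lipschitz modulus $\kappa(\xi)=\sum_{i=1}^n c_i\kappa_1^i(\xi)+c_1\kappa_2(\xi)$ for the integrand, so both $g$ and $\hat{g}_N$ are Lipschitz with constants governed by $\mathbb{E}[\kappa(\xi)]$ and its empirical analogue; choosing the mesh $\nu$ of order $(\epsilon_1-\epsilon_2)/(O(1)\mathbb{E}[\kappa(\xi)])$ forces the discretization error to absorb at most half of the budget $(\epsilon_1-\epsilon_2)/2$, reducing the task to controlling $|\hat{g}_N-g|$ at the finitely many net points. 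The finiteness of the moment generating functions of the $\kappa_1^i$ and $\kappa_2$ near the origin, also furnished by (C1), is what lets this Lipschitz control hold with exponentially high probability rather than merely in expectation.

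At each net point I would invoke the Chernoff bound together with condition (C2), which asserts that the centered increment $Y_{x',x}$ has a sub-Gaussian moment generating function with proxy variance $\lambda^2\|x'-x\|^2$. This yields a pointwise tail estimate of the form $\mathbb{P}(|\hat{g}_N(x^{(j)})-g(x^{(j)})|>(\epsilon_1-\epsilon_2)/4)\le 2\exp(-O(1)N(\epsilon_1-\epsilon_2)^2/(\lambda^2 D^2))$. A union bound over the net then gives a total failure probability no larger than $2(O(1)D/\nu)^{n}\exp(-O(1)N(\epsilon_1-\epsilon_2)^2/(\lambda^2 D^2))$; setting this equal to $\beta$, taking logarithms, and solving for $N$ produces exactly (\ref{eq:sample size bound}), the $n\ln(O(1)\mathbb{E}[\kappa(\xi)]D/(\epsilon_1-\epsilon_2))$ contribution coming from the log-cardinality of the net and the $\ln(1/\beta)$ contribution from the confidence level. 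Finally I would verify that the concrete follower objective fits this template, its integrand $\sum_{i=1}^n c_i\,[(1-\tau_i)t_i+(f(x,\xi)-t_i)_+]+c_1 f(x,\xi)$ inheriting both the Lipschitz structure and the tail behavior from the assumptions on $f$.

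The main obstacle I anticipate is making the uniform, rather than merely pointwise, exponential bound rigorous: one must couple the sub-Gaussian increment estimate (C2) with the random Lipschitz control (C1) so that the mesh-dependent discretization error is itself dominated with high probability, and one must track constants carefully enough that the dimension $n$ lands inside the logarithm rather than multiplying $N$. A secondary subtlety is that the sampled problem (\ref{eq:sample follower problem}) also carries the auxiliary variables $t_i$ and $s_i^k$; I would handle this by noting that the $s_i^k$ are pinned to $(f(x,\xi_k)-t_i)_+$ at optimality while the $t_i$ range over a bounded set by compactness of $X$, so that they either fold into the covering dimension or are eliminated by the inner minimization, leaving the effective search dimension recorded in (\ref{eq:sample size bound}).
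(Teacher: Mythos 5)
Your proposal reconstructs, essentially verbatim, the standard covering-number/sub-Gaussian large-deviation argument (deterministic sandwiching of $\epsilon_2$- into $\epsilon_1$-optimality, a $\nu$-net of $X$ of cardinality $(O(1)D/\nu)^n$, Chernoff bounds from the increment condition (C2), Lipschitz extension via $\kappa(\xi)$ from (C1), and a union bound) that underlies Corollary 5.19 of Shapiro, Dentcheva and Ruszczy\'nski, which is precisely what the paper's one-line proof cites. So this is the same approach as the paper, just written out in full rather than delegated to the reference.
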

\begin{proof}
See Corollary 5.19 of \cite{shapiro2021lectures}.
\qed
\end{proof}

With the above lemma, we conclude the following result on the performance of the solution to the sampled problem (\ref{eq:sample follower problem}) evaluated using the lower level problem in (\ref{eq:Stackelberg problem}).

\begin{theorem}
\label{thm:sample approximation}
Let $\xi_1,\xi_2,...,\xi_N$ denote $N$ $i.i.d.$ samples of the randomness $\xi$.
Assume that the conditions of Lemma \ref{lemma:monte carlo finite sample} hold. 
Then, for any $\mu\in \Delta^{M-1}$, a point $x^*_N$ solving (\ref{eq:sample follower problem}) is an $\epsilon$-optimal solution of (\ref{eq:follower problem}) for $\epsilon:= U^*(\sigma^L_\mu)+2d_{\mathfrak{F}}(\sigma^L_\mu,\sigma^{L,n}_\mu)+\epsilon_1$ if the sample size $N$ satisfies (\ref{eq:sample size bound}).
\end{theorem}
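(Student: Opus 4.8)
The plan is to view the point $x^*_N$ as sitting at the bottom of a three-level tower of approximations and to bound its true suboptimality by summing the errors incurred at each level. The true follower objective is $U(\sigma^L_\mu,\cdot)$ from (\ref{eq:follower problem}); replacing the equivalent risk spectrum by the $n$-step approximation $\sigma^{L,n}_\mu$ gives the step-reformulated objective of (\ref{eq:refomulation of the follower problem step function}); and replacing the expectation by the sample average over $\xi_1,\dots,\xi_N$ gives the sampled objective $U^{N,n}_\mu$ of (\ref{eq:sample follower problem}). I would control the sampling error by Lemma \ref{lemma:monte carlo finite sample} and the spectrum-approximation error by Corollary \ref{coro:bounds on step approximation} together with the pseudo-metric (\ref{eq:pseudo metric}), then recombine via the triangle inequality.

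First I would dispatch the sampling level. Since $x^*_N$ is by hypothesis an exact minimizer of (\ref{eq:sample follower problem}), it is an $\epsilon_2$-optimal solution with $\epsilon_2=0$, so the admissibility requirement $\epsilon_2\in[0,\epsilon_1)$ of Lemma \ref{lemma:monte carlo finite sample} holds. Under (C1)--(C2) and the sample-size condition (\ref{eq:sample size bound}), the lemma yields, with probability at least $1-\beta$, that $x^*_N$ is $\epsilon_1$-optimal for the step-reformulated problem, \ie, $U(\sigma^{L,n}_\mu,x^*_N)\le U^*(\sigma^{L,n}_\mu)+\epsilon_1$. Next I would cross from the step spectrum back to the true spectrum. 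Because $\tau\mapsto\text{AV@R}_\tau[f(x,\xi)]$ lies in $\mathfrak{F}$ for every fixed $x\in X$, the very definition of $d_{\mathfrak{F}}$ in (\ref{eq:pseudo metric}) gives the uniform pointwise estimate $|U(\sigma^L_\mu,x)-U(\sigma^{L,n}_\mu,x)|\le d_{\mathfrak{F}}(\sigma^L_\mu,\sigma^{L,n}_\mu)$, valid in particular at the data-dependent point $x^*_N$, while Corollary \ref{coro:bounds on step approximation}(i) controls the optimal values, $|U^*(\sigma^L_\mu)-U^*(\sigma^{L,n}_\mu)|\le d_{\mathfrak{F}}(\sigma^L_\mu,\sigma^{L,n}_\mu)$.

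Chaining these three bounds gives
\begin{align*}
U(\sigma^L_\mu,x^*_N)
&\le U(\sigma^{L,n}_\mu,x^*_N)+d_{\mathfrak{F}}(\sigma^L_\mu,\sigma^{L,n}_\mu) \\
&\le U^*(\sigma^{L,n}_\mu)+\epsilon_1+d_{\mathfrak{F}}(\sigma^L_\mu,\sigma^{L,n}_\mu) \\
&\le U^*(\sigma^L_\mu)+2\,d_{\mathfrak{F}}(\sigma^L_\mu,\sigma^{L,n}_\mu)+\epsilon_1,
\end{align*}
which is precisely the asserted bound with the stated $\epsilon:=U^*(\sigma^L_\mu)+2d_{\mathfrak{F}}(\sigma^L_\mu,\sigma^{L,n}_\mu)+\epsilon_1$, holding with probability at least $1-\beta$.

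The two spectrum estimates are routine once the pseudo-metric uniformity is noted, so the real content is imported wholesale from Lemma \ref{lemma:monte carlo finite sample}. Consequently the main obstacle I anticipate is not the recombination but the verification that the hypotheses of that lemma genuinely transfer to the present summands: I would need to confirm that the Lipschitz functions $\kappa_1^i,\kappa_2$ and the sub-Gaussian moment bound (C2) hold for $\sum_i c_i(f(x,\xi)-t_i)_+ + c_1 f(x,\xi)$ at the particular coefficients $c_i=\sum_m\mu_m\eta_{\theta_m}(\tau_i)$, and to ensure the probability $1-\beta$ is carried unchanged through the otherwise deterministic spectrum bounds so that the final conclusion is a genuine high-probability statement.
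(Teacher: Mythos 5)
Your proposal is correct and follows essentially the same route as the paper's proof: invoke Lemma \ref{lemma:monte carlo finite sample} for the $\epsilon_1$-optimality of $x^*_N$ on the step-approximated problem with probability $1-\beta$, then pay $d_{\mathfrak{F}}(\sigma^L_\mu,\sigma^{L,n}_\mu)$ twice — once for the optimal values via Corollary \ref{coro:bounds on step approximation}(i) and once for the evaluation at $x^*_N$ via the definition of the pseudo-metric — and chain. Your explicit observation that an exact minimizer of (\ref{eq:sample follower problem}) gives $\epsilon_2=0$ and your correct attribution of the optimal-value bound to part (i) of the corollary (the paper's text cites part (ii) but uses the part-(i) estimate) are minor clarifications of the same argument.
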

\begin{proof}
First, we observe from Lemma \ref{lemma:monte carlo finite sample} that, with probability at least $1-\beta$,  $x^*_N$ is an $\epsilon_1$-optimal solution of (\ref{eq:refomulation of the follower problem step function}), \ie, $U(\sigma^{L,n}_\mu, x^*_N)\leq U^*(\sigma^{L,n}_\mu)+\epsilon_1$.
From (ii) of Corollary \ref{coro:bounds on step approximation}, we deduce that $|U^*(\sigma^L_\mu)-U^*(\sigma^{L,n}_\mu)|\leq d_{\mathfrak{F}}(\sigma^L_\mu, \sigma^{L,n}_\mu)$.
Then, we obtain that, with probability at least $1-\beta$, the following condition holds:
\begin{equation}
    U(\sigma^{L,n}_\mu,x^*_N)\leq U^*(\sigma^L_\mu)+d_{\mathfrak{F}}(\sigma^L_\mu,\sigma^{L,n}_\mu)+\epsilon_1.
    \label{eq:proof:intermediate bound 1 on x^*_N}
\end{equation}
The representation (\ref{eq:kusuoka representation of U}) leads to 
\begin{equation}
    \begin{aligned}
         U(\sigma^L_\mu,x^*_N)
         &=\int_0^1 \text{AV@R}_\tau [f(x^*_N,\xi)] d \sigma^L_\mu \\
         &=\int_0^1 \text{AV@R}_\tau [f(x^*_N,\xi)]\left( d\sigma^L_\mu-d\sigma^{L,n}_\mu+d\sigma^{L,n}_\mu \right)\\
         &=\int_0^1 \text{AV@R}_\tau [f(x^*_N,\xi)]d\sigma^{L,n}_\mu+\int_0^1 \text{AV@R}_\tau [f(x^*_N,\xi)]\left(d\sigma^L_\mu-d\sigma^{L,n}_\mu \right).
         \label{eq:proof:intermediate bound 2 on x^*_N}
    \end{aligned}
\end{equation}
By definition of $d_{\mathfrak{F}}(\cdot,\cdot)$, (\ref{eq:proof:intermediate bound 1 on x^*_N}) and (\ref{eq:proof:intermediate bound 2 on x^*_N}) lead to the following inequality:
\begin{equation*}
    \begin{aligned}
         U(\sigma^L_\mu,x^*_N)
         &\leq U(\sigma^{L,n}_\mu,x^*_N)+d_{\mathfrak{F}}(\sigma^L_\mu,\sigma^{L,n}_\mu)\\
         &\leq U^*(\sigma^L_\mu)+2d_{\mathfrak{F}}(\sigma^L_\mu,\sigma^{L,n}_\mu)+\epsilon_1.
    \end{aligned}
\end{equation*}
Denote $\epsilon:= U^*(\sigma^L_\mu)+2d_{\mathfrak{F}}(\sigma^L_\mu,\sigma^{L,n}_\mu)+\epsilon_1$.
Then, with probability at least $1-\beta$, $x^*_N$ is an $\epsilon$-optimal solution of $\min_{x\in X}U(\sigma^L_\mu, x)$.
Finally, suppose that $\sigma^L_\mu$ is obtained by choosing the probability measure $\Tilde{\mu}\in\mathcal{Q}$, \ie, $\eta^L_\mu(\tau)=\sum_{m=1}^M\eta_{\theta_m}(\tau)\Tilde{\mu}_{\theta_m}$ and $d\sigma^L_\mu(\alpha)=(1-\alpha)d\eta^L_\mu(\alpha)$.
Then, $x^*_N$ can be considered as an $\epsilon$-optimal solution of $\min_{x\in X}U_{\Tilde{\mu}}(x)$ with probability at least $1-\beta$.
This completes the proof.
\qed
\end{proof}

The assertions of Theorem \ref{thm:sample approximation} connect the sampled optimal value reformulation of STRIPE in Proposition \ref{prop:sample approximation} with the $\epsilon$-robust $\delta$-approximate Stackelberg equilibrium of STRIPE proposed in Definition \ref{def:extended stackelberg solution}.
In particular, Theorem \ref{thm:sample approximation} shows the possibility of obtaining $\epsilon$-approximate follower's responses given arbitrary leader's choice of RPT distribution with high probability. 
Together with Theorem \ref{thm:optimality compromise}, one can also obtain the bound on the leader's ambiguity tolerance induced by the approximation of the randomness $\xi$ with $N$ samples in (\ref{eq:sample follower problem}) and the approximation with the $n$-step function (\ref{eq:step function}).

\subsection{An Example of the Single-Level  Reformulation with Parameterized Type Spaces}
\label{sec:reformulation:parameterized}
When the RPT space is the space containing parameterized risk measures as presented in Example \ref{eg:2}, the single-level reformulation of STRIPE will largely depend on the choice of the base risk measure.
Here, we use an example to show that the STRIPE problem can be significantly simplified by reformulation techniques when we adopt parameterized type spaces.

\begin{example}
Consider the scenario where individuals in the population possess risk preferences described by the absolute semideviation measures. 
Specifically, let $\rho_\theta[Z]=\mathbb{E}[Z]+\theta \mathbb{E}\{[z-\mathbb{E}(Z)]_+\}$ with $\theta\in[\theta_1,\theta_2]\subset(0,1)$.
We work in the space $\mathcal{Z}=\mathcal{L}_1(\Xi, \mathcal{F}, P)$.
Let $\kappa=\text{Pr}\{Z>\mathbb{E}[Z]\}$.
Then, we can express $\eta_\theta$ as follows \cite{shapiro2013kusuoka}:
\begin{equation*}
    \eta_\theta(\xi)=\begin{cases}
    1-\theta\kappa, \text{ if } 0\leq\xi < 1-\kappa, \\
    1+\theta(1-\kappa), \text{ if } 1-\kappa\leq
    \xi \leq1.
    \end{cases}
\end{equation*}
Since the random cost $Z$ can be chosen from the set $\mathcal{Z}$, $\kappa$ can be any number in $(0,1)$.
Thus, though the dual set of the mean semideviation measure is not generated by one element in $\mathcal{Z}^*$ \cite{shapiro2013kusuoka}, we can still express its the Kusuoka representation as follows:
\begin{equation*}
    \rho_\theta[Z]=\sup_{\kappa\in(0,1)}\left\{ (1-\theta\kappa)\text{AV@R}_0[Z] + (\theta\kappa)\text{AV@R}_{1-\kappa}[Z] \right\}.
    \label{eq:kusooka of mean semideviation}
\end{equation*}
Using the optimization formulation of the AV@R in \cite{rockafellar2000optimization}, we can express the risk evaluated by an individual with type $\theta$ as
\begin{equation}
     \rho_\theta[Z]=\sup_{\kappa\in(0,1)} \inf_{t\in\mathbb{R}} \mathbb{E} \left\{ Z+\theta\kappa(t-Z)+\theta[Z-t]_+ \right\}.
     \label{eq:maxmin of kusuoka of mean semidiviation}
\end{equation}
Assuming that the distribution function $\phi_Z(\cdot)$ of $Z$ is continuous, we obtain that the unique saddle point of (\ref{eq:maxmin of kusuoka of mean semidiviation}) is $(\phi_Z(\mathbb{E}[Z]), \mathbb{E}[Z])$.
Since the equivalent risk spectra of the leader is $\eta^L_\mu(\tau):=\int_\Theta \eta_\theta(\tau)d\mu(\theta)$, $\eta^L_\mu(\xi)$ is also a piecewise constant function having one point of discontinuity at $\xi=1-\kappa$ as follows:
\begin{equation*}
    \eta^L_\mu(\xi)=\begin{cases}
    1-\kappa\mathbb{E}_\mu[\theta], \text{ if } 0\leq\xi < 1-\kappa, \\
    1+(1-\kappa)\mathbb{E}_\mu[\theta], \text{ if } 1-\kappa\leq
    \xi \leq1.
    \end{cases}
\end{equation*}
Since $\mu\in\mathcal{Q}$, $\mathbb{E}_\mu[\theta]$ can admit any value from the interval $[\theta_1,\theta_2]$ when the choice of $\mu$ is proper.
Therefore, to optimize $L(x)$ in (\ref{eq:leader loss func}) is equivalent for the leader to directly choose a value of $\mathbb{E}_\mu[\theta]$ instead of choosing the RPT distribution $\mu$.
Possible RPT distributions that lead to the optimal $L(x)$ and minimize the distance term $W_1(\mu,\mu^0)$ can then be computed by comparing the equivalent risk spectra $\eta^L_{\mu}$ given $\mathbb{E}_\mu[\theta]$ with the risk spectra associated with the risk measures $\rho_\theta$.
Finally, the solution to the STRIPE problem can be found by comparing the total costs of the leader with different choices of the value of $\mathbb{E}_\mu[\theta]$.
Similar techniques can be applied to scenarios where other classes of parametric risk measures are chosen.
\end{example}

\section{Applications}
\label{sec:case}

In this section, we discuss two distinct applications that are unified within the STRIPE framework.
In the first application, we use STRIPE to investigate contract problems with risk preference design. 
We demonstrate the application of the approximate Stackelberg equilibrium in Definition \ref{def:extended stackelberg solution} in contract design problems and elaborate on its implication in incentive compatibility and equilibrium selection.
In the second application, we show the role of STRIPE in meta-learning problems.
In particular, we leverage the sensitivity analysis introduced in Section \ref{sec:reformulation:finite} to address the challenge of how to adapt the meta-models.

\subsection{Contract Problem with Risk Preference Design}
\label{sec:case:contract}

We consider the classical P-A moral hazard problem \cite{stole2001lectures}, which is inherently a bilevel optimization problem when the agent's action is hidden in the eye of the principal. The P-A problem has been used for contract and mechanism designs in economic sciences and has also been widely applied in other domains such as computer science and engineering.

We consider a contract design problem with one principal and one agent.
Following the setting of Section \ref{sec:problem}, the agent in the contract problem is an idiosyncratic individual from a population with RPTs $\theta\in \Theta:=\{\theta_1,\theta_2,...,\theta_M\}$ who chooses action $x\in X$.
The uncertainty $\xi\in \Xi$ denotes  the principal's loss which is affected by the agent's action.
An example of the set $\Xi$ is the interval $\Xi=[\underline{\xi},\Bar{\xi}]$, where $\underline{\xi}$ represents the minimum loss to the principal and $\Bar{\xi}$ denotes the loss in the worst-case scenario.
We consider $P(\xi, x)$ as the reference probability measure parameterized by the agent's action.

The principal determines a contract function $w:\Xi \rightarrow \mathbb{R}$ to minimize her own objective function.
An example of a contract is the wage transferred from an employer to an employee.
The risk-neutral principal's problem is captured by
\begin{equation*}
    \int_{\Xi}w(\xi) + \xi dP(\xi,x).
    \label{eq:principal problem}
\end{equation*}
The objective function of the agent is
\begin{equation*}
   \sum_{i=1}^n\rho_{\theta_i}[U(w(\xi), \xi ,x)]\cdot \mu_i,
\end{equation*}
where $U:\mathbb{R}\times \Xi \times X\rightarrow \mathbb{R}$ denotes the loss function of the agent and $\mu\in\mathcal{Q}$ is the distribution of the RPTs.
The hidden-action P-A problem with risk-preference design is described as follows:
\begin{equation}
    \begin{aligned}
         \min_{w(\cdot), \mu(\cdot), x}
         & \int_{\Xi}w(\xi) + \xi dP(\xi,x)+\gamma W_1(\mu, \mu^0) \\
         \text{s.t. } \ \ 
         & \sum_{i=1}^n\rho_{\theta_i}[U(w(\xi), \xi ,x)]\cdot \mu_i\leq \Bar{U}, \text{(IR)}, \\
         & x\in \argmin_{x'\in X}  \sum_{i=1}^n\rho_{\theta_i}[U(w(\xi), \xi ,x')]\cdot \mu_i, \text{(IC)}, \\
         \label{eq:contract problem}
    \end{aligned}
\end{equation}
where IR represents the individual rationality constraint guaranteeing beneficial participation and IC refers to the incentive compatibility constraint indicating that the agent optimizes her own objective.
Note that the minimization over $x$ in (\ref{eq:contract problem}) indicates that the principal can control the feasible actions of the agent and that the problem is formulated in an optimistic perspective.

The formulation (\ref{eq:contract problem}) enriches the standard contract problem in two dimensions.
First, the risk measure captures not only the nonlinearity of the utility attitude of the agent but also her preference towards probabilistic risks.
Second, the RPT distribution equips the principal with an additional design parameter, which provides an extra degree of freedom to react to the scenario of hidden actions.

The IC constraint in (\ref{eq:contract problem}) is based on the fundamental assumption that the agents are prefect optimizers.
It indicates that, given a contract, the agent can obtain the exact action that optimizes her own objective function.
Sometimes agents can be, however, limited by their observations or computational capacity to act in a suboptimal way. 
It is necessary to consider approximate IC in P-A problems.

The $\epsilon$-approximate IC in the context of (\ref{eq:contract problem}) can be formulated as:
\begin{equation}
    \sum_{i=1}^n\rho_{\theta_i}[U(w(\xi), \xi ,x)]\cdot \mu_i
     \leq 
     \min_{x'\in X}  \sum_{i=1}^n\rho_{\theta_i}[U(w(\xi), \xi ,x')]\cdot \mu_i + \epsilon.
     \label{eq:app IC}
\end{equation}
With (\ref{eq:app IC}) in lieu of the IC constraint of (\ref{eq:contract problem}), the agents in the P-A problem cannot play their optimal strategies.
There is a need for choosing a perspective toward the set of feasible agent's actions, that is, either pessimism or optimism.
The choice of perspectives affects the principal's revenue of the contract.
If we choose, in accordance with Definition \ref{def:extended stackelberg solution}, the pessimistic perspective, the principal has no control over the actions the agent can choose, and she adopts the robust solution, \ie, she must choose the optimal contract and the RPT distribution under the worst-case scenario agent's action satisfying (\ref{eq:app IC}).
Then, Theorem \ref{thm:optimality compromise} implies that pessimism decreases the principal's revenue at the worst possible rate of $O(\epsilon^{1/2})$.

On the contrary, the optimistic perspective provides the principal with the ability of forcing the agents to choose certain actions; \ie, the principal can select equilibrium solutions to mechanism design problems.
In this scenario, it is straightforward to observe that the $\epsilon$-approximate IC benefits the principal, since the relaxation of the IC constraint enlarges the feasible set of the principal's decision variable.
In fact, a classical upper bound for the principal's revenue benefit when the IC is relaxed by $\epsilon$ is in the order of $O(\epsilon^{1/2})$ (see, for example, \cite{balseiro2022mechanism} and the references therein).
The rate coincides with what we have discovered in Theorem \ref{thm:optimality compromise} under the pessimistic perspective.

Note that the formulation of (\ref{eq:contract problem}) has been investigated in \cite{liu2022mitigating} in the context of cyber insurance.
It has focused on the characterization of the optimal contract plan.
In particular, using the proposed metric that measures the intensity of the moral hazard issue, we have shown that risk preference design can mitigate moral hazard.
This fact indicates the reduction of the deviation from the second-best contract obtained from solving the hidden-action problem to the first-best contract obtained from solving the full-information benchmark problem.
The risk preference design has implications in the following two scenarios to improve the performance of the contract design.
In the first scenario, the principal solves the hidden-action contract problem with risk preference design. The principal obtains an optimal contract whose performance is close to the one obtained from the full-information benchmark.
In the second scenario, the principal solves the 
full-information counterpart of (\ref{eq:contract problem}). 
The risk preference design can mitigate moral hazard and help achieve the performance attained by the full-information case.

\subsection{Guided Risk-Sensitive Meta-Learning}
\label{sec:case:learning}

Meta-learning \cite{vanschoren2018meta} is an emerging branch of machine learning that focuses on finding an averaged model or a representative parameter vector for fast adaptations to a class of related learning tasks.
The STRIPE framework is closely related to meta-learning.
In particular, we show that the follower's problem in our framework resembles a risk-sensitive version of the meta-learning problem of \cite{finn2017model}. 
This resemblance enriches the learning tasks with risk attitudes toward model uncertainties or data perturbations. 
The leader's problem in our framework plays the role of providing guidance to the selection of the meta-parameter.
This guidance acts as an exogenous criterion on the meta-parameter, such as an autonomous driver's license in autonomous driving problems.

We take the binary classification problem as an example of statistical learning problems.
Consider the random vector $\xi=(\mathfrak{x}, \mathfrak{y})$ with $\mathfrak{x}\in\mathbb{R}^n$ denoting the input data and $\mathfrak{y}\in\{+1, -1\}$ denoting the output label. 
For classification problems, the model $f(x, \xi)$ can be formulated as $\mathfrak{L}(\mathfrak{y}\cdot \mathfrak{x}^T x)$ with $\mathfrak{L}(z)$ representing the loss function, such as a hinge loss $\max\{0, 1-z\}$ in support vector machines or a log-loss $log(1+e^{-z})$ in logistic regression problems.
Then, in the follower's problem (\ref{eq:follower problem}), $\rho_\theta[f(x,\xi)]$ denotes a
risk-sensitive learning problem, which is closely related to the distributionally robust learning problem \cite{kuhn2019wasserstein} as depicted by the equivalent formulation (\ref{eq:dual representation of individual risk}).
A type $\theta$ in this scenario represents the adversarial behaviors due to the errors in the data or malicious manipulations of the dataset. The distributional robustness aims to find the learning parameters which is suitable under the worst-case data distribution.
With the averaging over a distribution $\mu$ of tasks with robustness consideration $\theta$, $\mathbb{E}_{\theta \sim \mu}[\rho_\theta[f(x,\xi)]]$ can be interpreted as a meta-objective (see \cite{finn2017model}).
If we consider the one-shot gradient update with step-size $\mathfrak{a}$, the follower's problem becomes:
\begin{comment}
\begin{equation}
    \min_{x\in X} \mathbb{E}_{\theta\sim\mu} \sup_{\nu\in\mathfrak{m}_{\theta}}\int_{\Xi}\mathfrak{L}(\mathfrak{y}\cdot \mathfrak{x}^T (x-\mathfrak{a}\nabla_x\rho_\theta[\mathfrak{L}(\mathfrak{y}\cdot x^T\mathfrak{x})])   ) \nu(\xi)dP(\xi),
\end{equation}
\end{comment}
\begin{equation}
    \min_{x\in X} \mathbb{E}_{\theta\sim\mu} 
    \left[ \rho_\theta [ \mathfrak{L}(\mathfrak{y}\cdot \mathfrak{x}^T(x-\mathfrak{a}\nabla_x\rho_\theta[\mathfrak{L}(\mathfrak{y}\cdot x^T\mathfrak{x})]))] \right].
    \label{eq:meta learning}
\end{equation}
The formulation in (\ref{eq:meta learning}) takes the form of a risk sensitive meta-learning problem.
It has been shown in \cite{finn2017model} that the solution to a meta-learning problem can adapt to individual tasks even with only one gradient update.

In our framework, we have a leader's problem guiding the learning of the meta-parameter.
In particular, the loss $J_0(x)$ has the interpretation of a test or a criterion for the meta-classifier. 
The distribution $\mu$ in the term $W_1(\mu, \mu_0)$ can be considered as a manipulation of robust considerations such that the meta-classifier learned can better meet the criterion $J_0(x)$.
The result in Section \ref{sec:asymmetry} serves as a useful method to identify a given learned model,  such as a classifier or a regressor, by assigning it a distribution of learning tasks.

Adaptation of meta-parameters to specific learning tasks is one essential focus of meta-learning. 
From the formulation of the meta-objective (\ref{eq:meta learning}), we observe that the one-shot gradient update $x-\mathfrak{a}\nabla_x\rho_\theta[\mathfrak{L}(\mathfrak{y}\cdot \mathfrak{x}^T x)]$ has taken into account the adaptation of the meta-parameters after they are solved using (\ref{eq:meta learning}).
Next, we show that using the STRIPE framework, we can obtain performance estimates of the adaptation of meta-parameters.

For a given learning task distribution $\mu\in\mathcal{Q}$, let $x^*_{\mu,\mathfrak{L}}$ and $U^{N,n,*}_{\mu,\mathfrak{L}}$ denote the optimal solution to and the optimal value of problem (\ref{eq:meta learning}) using the $n$-step approximation and the $N$-sample approximation as presented in Proposition \ref{prop:sample approximation}.
Let $\mathrm{1}_m$ denote the Dirac measure such that $\mathrm{1}_m(\theta_m)=1$ and $\mathrm{1}_m(\theta)=0$ for $\theta\neq \theta_m$.
We use $U^{N,n,*}_{\mathrm{1}_m,\mathfrak{L}}$ to denote the value of the meta-objective function in (\ref{eq:meta learning}) given task distribution $\mathrm{1}_m$ evaluated at $x^*_{\mu,\mathfrak{L}}$.
The adaptation of the meta-parameter  $x^*_{\mu,\mathfrak{L}}$ to the learning task $\theta_m$ is $x^*_m=x^*_{\mu,\mathfrak{L}}-\mathfrak{a}\nabla_x\rho_\theta[\mathfrak{L}(\mathfrak{y}\cdot \mathfrak{x}^T x^*_{\mu,\mathfrak{L}})]$.
The performance of this adaptation is then computed using $\rho_{\theta_m}[\mathfrak{L}(\mathfrak{y}\cdot \mathfrak{x}^T x^*_m)]$, which is equivalent to  $U^{N,n,*}_{\mathrm{1}_m,\mathfrak{L}}$.
Therefore, using the sensitivity result from Proposition \ref{prop:sensitivity}, we can obtain the adaptation performance estimate.
For example, the linear approximation of $U^{N,n,*}_{\mathrm{1}_m,\mathfrak{L}}$ using $U^{N,n,*}_{\mu,\mathfrak{L}}$  can be expressed as
\begin{equation}      \Tilde{U}^{N,n,*}_{\mathrm{1}_m,\mathfrak{L}}=U^{N,n,*}_{\mu,\mathfrak{L}}+\left(\nabla_{\mu}U^{N,n,*}_{\mu,\mathfrak{L}}\right)^T \cdot (\mathrm{1}_m-\mu),
    \label{eq:adaptation performance estimate}
\end{equation}
where $\nabla_{\mu}U^{N,n,*}_{\mu,\mathfrak{L}}$ denotes the gradient vector of the optimal value $U^{N,n,*}_{\mu,\mathfrak{L}}$ computed using (\ref{eq:sensitivity of optimal value function}).

Since the value in (\ref{eq:adaptation performance estimate}) can be computed for all tasks within the class of learning tasks of interests, we can obtain the complete adaptation performance estimate of the optimal meta-parameter. 
These performance estimates can be further utilized to improve the meta-parameter.
For example, if we aim to obtain a meta-parameter such that the adaptation performance in the worst-case scenario is guaranteed, then we can add the following constraint to the lower level:
\begin{equation*}
    \max_{m=1,2,...,M} \Tilde{U}^{N,n,*}_{\mathrm{1}_m,\mathfrak{L}} \leq \hat{U},
\end{equation*}
where $\hat{U}$ is a predetermined performance level.

The performance estimates (\ref{eq:adaptation performance estimate}) can reduce the computation complexity of (\ref{eq:meta learning}).
The expectation over $\theta$ with respect to the distribution $\mu$ in (\ref{eq:meta learning}) indicates that gradient terms associated with all the learning tasks are computed to update the decision variable $x$.
However, as we gradually approach the optimal meta-parameter during the iterations of an optimization algorithm, the meta-parameter becomes well-adaptable to certain learning tasks, as measured by the performance estimates.
In the adaptation stage, we can only update the meta-parameter using the gradient terms associated with the learning tasks that the current meta-parameter cannot be effectively adapted to, and consequentially reduce the computation complexity in the lower level.

\section{Concluding Remarks}
\label{sec:conclusion}
In this paper, we have proposed the Stackelberg risk preference design framework to optimally shape the distribution of risk attitudes in a population of decision-makers.
Considering an idiosyncratic individual from the population who possesses the average risk preference, we have represented the population using a representative and formulated a Stackelberg game between a designer and an agent.
We have defined the $\epsilon$-Robust $\delta$-Approximate Stackelberg Equilibrium to study the $\delta$-approximate design of the leader when
the follower's response is $\epsilon$-optimal.
Based on the growth conditions in the stability analysis of optimization problems, we have derived an upper bound on the leader's design cost using the primitive perception gap which measures the distance between the uncontrolled risk preference distribution and the desired one.
The consideration of $\epsilon$-robustness in the follower's problem introduces a cost for the leader that is upper bounded in the order of $\sqrt{\epsilon}$.
This result not only aids the parameter selection in the design problem, but also provides insights on a class of Principal-Agent problems with approximate incentive compatibility constraints.
We have reformulated the Stackelberg risk preference design problem 
into a single-level optimization problem leveraging the spectral representations of risk measures.
Furthermore, a data-driven approach has enabled the computation of 
$\epsilon$-robust follower's solutions with high probability given a sufficient number of samples.
This approach has a strong implication for risk-sensitive meta-learning problems in the literature in that it provides a way to estimate the adaptation performances of the meta-parameter to reduce the computation complexity.

Future works would include the following directions.
One could enrich the STRIPE framework by considering multiple followers and their interdependencies. 
It would be useful to extend the follower’s risk-sensitive decision-making problem into its multistage counterpart and study the design of the trajectory of risk-preference type distributions in dynamic environments. 
Furthermore, the leader could choose a perspective other than optimism or pessimism toward the set of the follower’s solutions leveraging distributional information of this set.
It would lead to risk-sensitive leader's decisions.

\section*{Conflicts of Interests}
The authors declare no conflict of interests.

\bibliographystyle{spmpsci}      % mathematics and physical sciences
\bibliography{bibliography.bib}
\nocite{*}

\end{document}